\documentclass[a4paper,11pt]{article}
\usepackage{makeidx}
\usepackage[utf8x]{inputenc}
\usepackage[T1]{fontenc}
\usepackage{lmodern}
\usepackage{textcmds}
\usepackage{epsfig}
\usepackage{amsfonts}
\usepackage{amsthm}
\usepackage{latexsym}
\usepackage{amsmath}
\usepackage{amscd}
\usepackage{amssymb}
\usepackage{enumerate}
\usepackage{textcomp}
\usepackage{yfonts}
\usepackage{setspace}
\usepackage{diagrams}
\usepackage{geometry}

\usepackage{authblk}
\usepackage{subscript}

\theoremstyle{plain}
\newtheorem{lm}{Lemma}[section]
\newtheorem{thm}[lm]{Theorem}
\newtheorem{cor}[lm]{Corollary}
\newtheorem{lem}[lm]{Lemma}
\newtheorem{prop}[lm]{Proposition}
\theoremstyle{definition}

\newtheorem{defi}[lm]{Definition}

\newtheorem{note}[lm]{Notation}

\theoremstyle{remark}
\newtheorem{rem}[lm]{Remark}

\newtheorem{para}[lm]{}

\newcommand{\mr}[1]{\mathrm{#1}}
\newcommand{\mc}[1]{\mathcal{#1}}

\newcommand{\mf}[1]{\mathfrak{#1}}

\newcommand{\ov}[1]{\overline{#1}}

\DeclareMathOperator{\m}{\mathcal{M}}

\DeclareMathOperator{\msp}{\mathrm{Spec}}

\DeclareMathOperator{\Hc}{\mathcal{H}om}

\DeclareMathOperator{\mo}{\mathcal{O}}

\DeclareMathOperator{\Hom}{\mathrm{Hom}}

\title{Smoothness of moduli space of stable torsion-free sheaves with fixed determinant in mixed characteristic}
\author{Inder Kaur}
\date{\today}

\begin{document}
 
\maketitle

\begin{abstract}
Let $R$ be a complete discrete valuation ring with fraction field of characteristic $0$ and algebraically closed residue field of characteristic $p>0$.
Let $X_R \to \msp(R)$ be a smooth projective morphism of relative dimension $1$.
We prove that, given a line bundle $\mc{L}_R$ the moduli space of Gieseker stable torsion-free sheaves of rank $r\geq 2$ over $X_R$, with determinant $\mc{L}_R$, is smooth over $R$.
\end{abstract}

\section{Introduction}

\begin{note}\label{n}
Let $R$ be a complete discrete valuation ring with maximal ideal $\mf{m}$.
Denote by $K$ its fraction field of characteristic $0$ and by $k$ its residue field of characteristic $p>0$. Assume $k$ is algebraically closed.
Let $X_R \to \msp(R)$ be a smooth fibred surface and $X_k$ its special fibre. 
Fix a line bundle $\mc{L}_R$ on $X_R$. 
Let $P$ be a fixed Hilbert polynomial.
Throughout this note, semistability always refers to Gieseker semistability (see \cite[Definition $1.2.4$]{HL}).
\end{note}

\noindent In \cite[Theorem $0.2$]{LA1}, Langer proves that the moduli functor of semi(stable)torsion-free sheaves with fixed Hilbert polynomial $P$ on $X_{R}$ is uniformly (universally) corepresented by an $R$-scheme $M_{X_R}(P)$ (respectively $M^{s}_{X_R}(P)$). 
Recall the definition of the moduli functor of flat families of (semi)stable torsion free sheaves with fixed Hilbert poynomial $P$ and determinant $\mc{L}_{R}$ on $X_{R}$ (see Definition \ref{mrl}).
We denote this functor by  $\m^{s}_{X_R,\mc{L}_{R}}$.
In this note we prove the following: 

\begin{thm}[see Proposition \ref{msklcorep}, Remark \ref{mklcorep} and Theorem \ref{smooth}]\label{mainthm} We have the following:
\begin{enumerate}
\item The moduli functor $\m_{X_R,\mc{L}_{R}}$ is uniformly corepresented by a projective $R$-scheme of finite type denoted $M_{R,\mc{L}_R}$.
The open subfunctor $\m^{s}_{X_R,\mc{L}_{R}}$ for stable sheaves is universally corepresented by a $R$-scheme of finite type, denoted $M^{s}_{R,\mc{L}_R}$.   
\item The morphism $M^{s}_{R,\mc{L}_R} \to \msp(R)$ is smooth.  
\end{enumerate}
\end{thm}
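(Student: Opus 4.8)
The plan is to prove smoothness of $M^s_{R,\mc{L}_R}\to\msp(R)$ by verifying the infinitesimal lifting criterion. By part (1) the scheme $M^s_{R,\mc{L}_R}$ is of finite type over the Noetherian ring $R$, so it is enough to show that for every surjection $A'\twoheadrightarrow A$ of local Artinian $R$-algebras with square-zero kernel $I$ — which we may take to be a one-dimensional $k$-vector space, $k$ being algebraically closed — every $R$-morphism $\msp(A)\to M^s_{R,\mc{L}_R}$ lifts to an $R$-morphism $\msp(A')\to M^s_{R,\mc{L}_R}$.

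The first step is to translate this into a deformation problem for sheaves. Since $\m^s_{X_R,\mc{L}_R}$ is only corepresented by $M^s_{R,\mc{L}_R}$, the discrepancy is the $\mb{G}_m$-gerbe of scalar automorphisms of stable sheaves, and this gerbe is trivial over an Artinian local base with algebraically closed residue field (its class lies in $\mr{Br}(A)=\mr{Br}(k)=0$). Hence a point $\msp(A)\to M^s_{R,\mc{L}_R}$ is represented by a sheaf $E_A$ on $X_A:=X_R\times_R A$, flat over $A$, fibrewise torsion-free (equivalently locally free, as the fibres of $X_R/R$ are smooth curves) and stable, together with an isomorphism $\det E_A\cong\mc{L}_A:=\mc{L}_R|_{X_A}$; and it suffices to extend the pair $(E_A,\det E_A\cong\mc{L}_A)$ to an analogous pair over $X_{A'}$, flat over $A'$. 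Equivalently, in a more geometric formulation, one shows that near the point determined by a stable bundle $E_k$ on $X_k$ the scheme $M^s_{R,\mc{L}_R}$ is étale-locally an algebraization of the hull of the fixed-determinant deformation functor of $E_k$, the scalars acting trivially on deformations; either way the question becomes whether that deformation functor is formally smooth over $R$.

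For the deformation calculus, put $E_k:=E_A|_{X_k}$, a stable locally free sheaf on the smooth projective curve $X_k/k$, and let $\mathcal{E}nd_0(E_k):=\ker\!\big(\mr{tr}\colon\mathcal{E}nd(E_k)\to\mo_{X_k}\big)$, a locally free sheaf, the trace being surjective as a map of sheaves in every characteristic. Since $X_{A'}\to\msp(A')$ is smooth, the standard deformation theory of sheaves with fixed determinant (see \cite[Ch.~2 and \S 4.5]{HL}, cf.\ \cite{LA1}) shows that the obstruction to extending $(E_A,\det E_A\cong\mc{L}_A)$ over $A'$ lies in $H^2\!\big(X_k,\mathcal{E}nd_0(E_k)\big)\otimes_k I$, and that when this group vanishes the extensions form a torsor under $H^1\!\big(X_k,\mathcal{E}nd_0(E_k)\big)\otimes_k I$. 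But $X_k$ is a curve, so $H^2$ of any coherent sheaf on it vanishes; in particular the obstruction group is zero and the required lift exists. Hence the infinitesimal lifting criterion holds, $M^s_{R,\mc{L}_R}\to\msp(R)$ is formally smooth, and being of finite type over $R$ it is smooth.

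I expect the principal difficulty to be the bookkeeping forced by corepresentability: one must be sure that lifting the \emph{sheaf} genuinely lifts the \emph{point of the coarse moduli space}, which is precisely where triviality of the $\mb{G}_m$-gerbe over Artinian local bases — or, in the alternative route, a Luna-type étale-local model of $M^s_{R,\mc{L}_R}$ at a stable point — is needed; and one must set up the obstruction theory relative to $R$, so that the obstruction class really lives on the one-dimensional special fibre $X_k$ (and not on an infinitesimal thickening of it), so that its vanishing can be deduced from $\dim X_k=1$ alone. Smoothness of the generic fibre $M^s_{K,\mc{L}_K}/K$ and of the special fibre $M^s_{k,\mc{L}_k}/k$, both classical facts about moduli of bundles on a curve over a field, are then recovered a posteriori from the smoothness of $M^s_{R,\mc{L}_R}$ over $R$.
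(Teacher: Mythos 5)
Your proposal addresses only part (2) of the theorem: part (1), the corepresentability of $\m_{X_R,\mc{L}_{R}}$ and $\m^{s}_{X_R,\mc{L}_{R}}$ by finite-type $R$-schemes, is simply assumed (``By part (1) the scheme \dots is of finite type''), whereas the paper proves it by constructing $M^{s}_{R,\mc{L}_R}$ as the fibre of the determinant morphism $\mr{det}\colon M^{s}_R\to\mr{Pic}(X_R)$ over $[\mc{L}_R]$, using that $\mc{R}^{s}\to M^{s}_R$ and $\mc{R}^{s}_{\mc{L}_R}\to M^{s}_{R,\mc{L}_R}$ are $\mr{PGL}(V)$-bundles. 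That construction is also what later identifies the deformation functor of the pair with $\hat{\mo}_{M^{s}_{R,\mc{L}_R},[\mc{F}_k]}$, so it cannot be skipped.

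For part (2) your overall strategy coincides with the paper's: reduce to unobstructedness of the fixed-determinant deformation functor at closed points and use $\dim X_k=1$. But the decisive step in your argument --- that the obstruction to extending $(E_A,\det E_A\cong\mc{L}_A)$ lies in $H^2\bigl(X_k,\ker(\mr{tr}\colon \Hc_{X_k}(\mc{F}_k,\mc{F}_k)\to\mo_{X_k})\bigr)\otimes_k I$ and that the lifts form a torsor under the corresponding $H^1$ --- is exactly the ``standard'' fixed-determinant obstruction theory whose proof the paper explicitly says fails in mixed characteristic (it is Artamkin's \cite[Proposition 1]{A}, whose argument uses the structure of $R$ as a $k$-algebra; note also that the usual splitting $\Hc(\mc{F}_k,\mc{F}_k)\cong\ker(\mr{tr})\oplus\mo_{X_k}$ breaks down when $p\mid r$). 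Citing it as a black box is therefore a genuine gap: the whole content of the paper is to replace that citation by a direct argument. The paper does so by (i) proving the sheaf-level trace is surjective and hence, since $H^2(\ker\mr{tr})=0$ on a curve, that $\mr{tr}^1$ is surjective on $H^1$; (ii) checking at the level of \v{C}ech cocycles that the determinant of a twisted lift $\mc{F}_{A'}(\xi)$ is $\det(\mc{F}_{A'})(\mr{tr}^1\xi)$ (Lemmas \ref{actgluelem}, \ref{detglue} and Proposition \ref{d6}); and then (iii) correcting the determinant of an arbitrary lift. On a curve this weaker package suffices, and it is provable in mixed characteristic; your write-up asserts the torsor/obstruction structure without supplying either (i) or (ii). Your remark on the $\mb{G}_m$-gerbe being trivial over an Artinian local base is a reasonable substitute for the paper's Lemma \ref{pro1}, but it too would need to be carried out rather than asserted.
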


\noindent Part $1$ is proven analogously to \cite[Theorem $3.1$]{EL}. For part $2$, we prove that the deformation functor at a point in the moduli space $M^{s}_{R, \mc{L}_R}$ is unobstructed (see Theorem \ref{artkn}).

\vspace{0.2 cm}
\noindent Note that Theorem \ref{mainthm} is proven by Langer in the case when $R$ is a $k$-algebra (see \cite[Proposition $3.4$]{LA3}). 
However, the proof does not generalize to our setup. This is because it relies on \cite[Proposition $1$]{A}, the proof of which does not hold in mixed characteristic. 
The main difficulty is that even in the case of vector bundles it uses the structure of $R$ as a $k$-algebra in a fundamental way (see \cite[Section $3$]{A}). 
We use the same philosophy as \cite[Proposition $1$]{A} (of using Cech cohomology) but take a more direct approach since we are working on a family of curves.  

\vspace{0,2 cm}
The setup is as follows: in \S \ref{bas} we recall the basic definitions and results needed for this note. 
We also prove the existence of the moduli space of stable torsion free sheaves with fixed determinant over $\msp(R)$. 
In \S \ref{deform} we show that the deformation functor at a point in the moduli space $M^{s}_{R,\mc{L}_R}$ is unobstructed. 
Finally in \S \ref{results} we prove that this moduli space is smooth over $\msp(R)$.

\vspace{0.2 cm}
\emph{Acknowledgements}: The author thanks Prof. A. Langer for a discussion during the conference 'Topics in characteristic $p>0$ and $p$-adic Geometry '. 
The author is grateful to the Berlin Mathematical School for financial support.

\section{Basic Definitions and results}\label{bas}

Keep Notations \ref{n}

\vspace{0.2 cm}
\noindent In this section we define the moduli functor of (semi)stable sheaves with fixed determinant. We prove that it is uniformly corepresented by an $R$-scheme of finite type. 

\vspace{0.2 cm}
\begin{defi}
Let $X_R \to \msp(R)$ be as in Notation \ref{n}.
\begin{enumerate}
\item Let $\m_{X_{R}/\msp(R)}(P)$ (as in \cite[Theorem $3.1$]{EL}) of pure Gieseker semistable sheaves. 
For simplicity we will denote this functor by $\m_R$ and the corresponding moduli space by $M_R$.
Denote by $\mc{P}ic_{X_R}$ the moduli functor for line bundles. 
By assumption $X_{R} \to \msp(R)$ is flat, projective with integral fibres, therefore by \cite[Theorem $9.4.8$]{FGA} the functor $\mc{P}ic_{X_R}$ is representable.
We denote this moduli space by $\mr{Pic}(X_R)$.   

\item By assumption $X_{R}$ is smooth over $R$. 
By \cite[Theorem $2.1.10$]{HL}, every coherent sheaf $\mc{E}$ on $X_R$ admits a locally free resolution 
\[ 0 \rightarrow \mc{E}_{n} \rightarrow \mc{E}_{n-1} \rightarrow \dots \rightarrow \mc{E}_{0} \rightarrow \mc{E} \rightarrow 0 .\]
Then $\mr{det}(\mc{E}) := \otimes \mr{det}(\mc{E}_{i})^{(-1)^{i}}$.

\vspace{0.2 cm}
Therefore we can define a natural transformation $\mr{det}: \m_R \to \mc{P}ic_{X_R}$. 
This induces a morphism between the schemes corepresenting these functors $M_{R} \to \mr{Pic}(X_R)$. 
\end{enumerate}
\end{defi}

Now we define the moduli functor for families of pure Gieseker semistable sheaves with fixed determinant. 

\begin{defi}\label{mrl}
Let $X_{R} \to \msp(R)$ be a smooth, projective morphism and $\mc{L}_{R}$ a line bundle on $X_{R}$.
For $P$ a fixed Hilbert polynomial, we define the moduli functor $\m_{X_{R},\mc{L}_{R}}(P)$, denoted $\m_{{R},\mc{L}_{R}}$ for simplicity, on $X_{R}$ of sheaves with fixed determinant $\mc{L}_{R}$.
Let $\m_{X_R,\mc{L}_{R}} : (\mr{Sch}/R)^{\circ} \rightarrow (\mr{Sets})$ be such that for an $R$-scheme $T$,

\[ \m_{X_R,\mc{L}_{R}}(T) := \left\{ \begin{array}{l}  S \mbox{- equivalence classes of families of pure Gieseker } \\
                                       \mbox{ semistable sheaves } \mc{F} \mbox{ on } X_{T}  \mbox{ with the property that }\\
                                      \mr{det}(\mc{F}) \simeq \pi^{*}_{X_{R}} \mc{L}_{R}\otimes \pi^{*}_{T}{\mc{Q}}, \mbox{ for some line bundle } \mc{Q} \mbox{ on } T 
                                    \end{array} \right \} / \sim \] 

\noindent where $\pi_{X_{R}}: X_{T} \rightarrow X_{R}$ and $\pi_T:X_T \to T$ are the natural projection maps and  $\mc{F} \sim \mc{F'}$, if and only if there exists a line bundle $\mc{L}$ on $T$, 
such that $\mc{F} \simeq \mc{F'} \otimes \pi^{*}_{T}\mc{L}$.

\vspace{0.2 cm}
\noindent We denote by $\m^{s}_{X_R,\mc{L}_{R}}$ the subfunctor for the stable sheaves. 
\end{defi}

We note that the moduli space $M^{s}_{R,\mc{L}_{R}}$ is a projective $R$-scheme. 

\begin{prop}\label{msklcorep}
The functor $\m^{s}_{R,\mc{L}_{R}}$ is universally corepresented by a $R$-scheme of finite type.
We denote this scheme by $M^{s}_{R,\mc{L}_{R}}$.
\end{prop}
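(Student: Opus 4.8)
The plan is to obtain $M^{s}_{R,\mc{L}_{R}}$ as a fibre of the determinant morphism $\mr{det}\colon M^{s}_{R} \to \mr{Pic}(X_R)$ constructed above, and then to identify the resulting $R$-scheme with the one corepresenting $\m^{s}_{R,\mc{L}_{R}}$. First I would recall that by \cite[Theorem $3.1$]{EL} (Langer's \cite[Theorem $0.2$]{LA1}) the functor $\m^{s}_R$ is universally corepresented by the quasi-projective $R$-scheme $M^{s}_R$, and that $\mc{P}ic_{X_R}$ is represented by $\mr{Pic}(X_R)$ since $X_R\to\msp(R)$ is flat, projective with integral fibres (\cite[Theorem $9.4.8$]{FGA}). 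The line bundle $\mc{L}_R$ gives an $R$-point $[\mc{L}_R]\colon \msp(R)\to \mr{Pic}(X_R)$, and I would define $M^{s}_{R,\mc{L}_{R}}$ to be the scheme-theoretic fibre product
\[
M^{s}_{R,\mc{L}_{R}} \;:=\; M^{s}_{R} \times_{\mr{Pic}(X_R)} \msp(R),
\]
the fibre being taken along $\mr{det}$ and $[\mc{L}_R]$. Being a fibre product of $R$-schemes of finite type it is again an $R$-scheme of finite type, and it is closed (hence projective) in $M^{s}_R$ because $\msp(R)\to\mr{Pic}(X_R)$ is a closed immersion onto the section $[\mc{L}_R]$ after, if necessary, passing to a connected component of $\mr{Pic}(X_R)$; this gives the parenthetical assertion that $M^{s}_{R,\mc{L}_{R}}$ is projective over $R$.

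The substantive point is the corepresentability statement: I must check that for every $R$-scheme $T$ a morphism $T\to M^{s}_{R,\mc{L}_{R}}$ is ``the same data'' as an element of $\m^{s}_{R,\mc{L}_{R}}(T)$ up to the appropriate corepresentation slack. Given a family $\mc{F}$ on $X_T$ with $\mr{det}(\mc{F})\simeq \pi_{X_R}^{*}\mc{L}_R\otimes\pi_T^{*}\mc{Q}$, the classifying map $T\to M^{s}_R$ composed with $\mr{det}$ sends $T$ to the class of $\mr{det}(\mc{F})$ in $\mr{Pic}(X_R)$, which by the defining isomorphism factors through the section $[\mc{L}_R]$ (the twist by $\pi_T^{*}\mc{Q}$ is exactly the ambiguity the Picard functor quotients out); hence the classifying map lifts to $M^{s}_{R,\mc{L}_{R}}$. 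Conversely, a map to the fibre product is a map to $M^{s}_R$ whose determinant is trivialized, i.e. a class of sheaves whose determinant agrees with $\mc{L}_R$ up to pullback from $T$, which is precisely an element of $\m^{s}_{R,\mc{L}_{R}}(T)$. The $S$-equivalence in $\m^{s}_R$ and the twisting equivalence $\mc{F}\sim\mc{F}\otimes\pi_T^{*}\mc{L}$ in Definition \ref{mrl} are compatible because twisting by a pullback from $T$ does not change the induced map to $\mr{Pic}(X_R)$ (it only changes $\mc{Q}$), so the universal corepresentation of $\m^{s}_R$ by $M^{s}_R$ restricts to a universal corepresentation of $\m^{s}_{R,\mc{L}_{R}}$ by the fibre; this is the formal verification carried out in \cite[Theorem $3.1$]{EL} and I would simply run the same argument here, base changed over $\msp(R)$.

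The main obstacle is bookkeeping rather than a deep difficulty: one has to be careful that $\mr{det}$ is genuinely defined as a natural transformation of functors (using the locally free resolutions of \cite[Theorem $2.1.10$]{HL}, which is why $X_R$ smooth is needed) so that it descends to an honest morphism of corepresenting schemes, and one has to make sure ``universal'' (as opposed to merely ``uniform'') corepresentation is preserved by the base change $\msp(R)\to\mr{Pic}(X_R)$ — this is where the fact that $\m^{s}_R$ (stable, as opposed to semistable) is \emph{universally} corepresented is used, since universal corepresentability is stable under arbitrary base change. I would also note for later use (it is what makes the smoothness statement in Theorem \ref{smooth} meaningful) that the closed points of $M^{s}_{R,\mc{L}_{R}}$ over $k$ and over $K$ are exactly the stable torsion-free sheaves on $X_k$, resp. $X_K$, with determinant $\mc{L}_k$, resp. $\mc{L}_K$, which follows from the fibrewise description of $M^{s}_R$ together with the fibre-product definition above.
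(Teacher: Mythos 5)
Your construction of the underlying scheme agrees with the paper's: in both cases $M^{s}_{R,\mc{L}_{R}}$ is, up to isomorphism, the fibre $N_{R,\mc{L}_{R}} := \mr{det}^{-1}(\mc{L}_{R})$ of the determinant morphism $\mr{det}\colon M^{s}_{R}\to\mr{Pic}(X_{R})$, and finiteness of type is immediate from that description. Where you diverge is in how universal corepresentability of $\m^{s}_{X_R,\mc{L}_{R}}$ is established. The paper does not argue directly on $M^{s}_{R}$: it pulls the determinant map back to the open subset $\mc{R}^{s}$ of the Quot scheme, sets $\mc{R}^{s}_{\mc{L}_{R}}:=\mr{det}_{\mc{R}^{s}}^{-1}(\mc{L}_{R})$, takes the universal categorical quotient of $\mc{R}^{s}_{\mc{L}_{R}}$ by $\mr{GL}(V)$, identifies that quotient with $N_{R,\mc{L}_{R}}$ using the fact that both $\mc{R}^{s}\to M^{s}_{R}$ and $\mc{R}^{s}_{\mc{L}_{R}}\to M^{s}_{R,\mc{L}_{R}}$ are fppf $\mr{PGL}(V)$-bundles (\cite[Lemma $6.3$]{M}), and then invokes \cite[Theorem $4.3.1$]{HL} to pass from the quotient description to corepresentability. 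The point of this detour is that on $\mc{R}^{s}$ there is an honest universal quotient family, so ``points are sheaves'' is literally true there, and corepresentability of the subfunctor comes out of the GIT formalism rather than a functorial verification on $M^{s}_{R}$.

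Your direct argument has to supply that content by hand, and this is where the one real weak spot sits: in the ``conversely'' step you treat a morphism $T\to M^{s}_{R}$ as ``a class of sheaves,'' i.e.\ you use $M^{s}_{R}$ as if it \emph{represented} $\m^{s}_{R}$, when it only corepresents it. The correct direct statement is that, by universal corepresentability of $\m^{s}_{R}$, the fibre-product functor $\underline{N_{R,\mc{L}_{R}}}\times_{\underline{M^{s}_{R}}}\m^{s}_{R}$ is corepresented by $N_{R,\mc{L}_{R}}$; one must then identify this fibre-product functor with $\m^{s}_{X_R,\mc{L}_{R}}$, and the nontrivial inclusion is exactly the one your ``conversely'' glosses over: a family $\mc{F}\in\m^{s}_{R}(T)$ whose determinant has the same classifying map to $\mr{Pic}(X_{R})$ as $\pi_{X_R}^{*}\mc{L}_{R}$ must actually satisfy $\mr{det}(\mc{F})\simeq\pi_{X_{R}}^{*}\mc{L}_{R}\otimes\pi_{T}^{*}\mc{Q}$. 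That requires the comparison of $\mr{Pic}(X_{R})(T)$ with $\mr{Pic}(X_{T})/\mr{Pic}(T)$, which does hold here because $X_{R}\to\msp(R)$ admits a section ($R$ complete, $k$ algebraically closed, fibres smooth of dimension one), but it is a hypothesis that must be invoked explicitly, not a formality. With that step supplied your route closes and is arguably more economical than the paper's; without it, the converse direction is not justified.
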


\begin{proof}
We know from the proof of \cite[Theorem $3.1$]{EL}, there exists a subset of the \emph{Quot} scheme denoted $\mc{R}^{s}$, such that  $M^{s}_{R}$ is a universal categorical quotient of this subset by the action of a certain general linear group.
Let $\alpha: \mc{R}^{s} \rightarrow M^{s}_{R}$ denote this quotient. 

\vspace{0.2 cm}
The natural transformation $\m^{s}_{R} \rightarrow \mc{P}ic_{X_{R}}$ which induces the determinant morphism $\mr{det} : M^{s}_{R} \rightarrow \mr{Pic}(X_{R})$.
By composing the morphism det with $\alpha$ we obtain, a morphism $\mr{det}_{\mc{R}^{s}} : \mc{R}^{s} \rightarrow  M^{s}_{R} \rightarrow \mr{Pic}(X_{R})$.

\vspace{0.2 cm}
\noindent Let $\mc{R}^{s}_{\mc{L}_{R}} := \mr{det}_{\mc{R}^{s}}^{-1}(\mc{L}_{R})$ denote the fibre of the map $\mr{det}_{\mc{R}^{s}}$ at the point corresponding to $\mc{L}_{R}$ and let $N_{R,\mc{L}_{R}}:= \mr{det}^{-1}(\mc{L}_{R})$.
Let $M^{s}_{R,\mc{L}_{R}}$ be a universal categorical quotient of $\mc{R}^{s}_{\mc{L}_{R}}$ by $\mr{GL}(V)$.
By definition of categorical quotient, there exists a unique morphism from $\phi^{s}_{\mc{L}_R}: M^{s}_{R,\mc{L}_{R}} \rightarrow N_{R,\mc{L}_{R}}$.
Since the quotients $\mc{R}^{s}\rightarrow M^{s}_{R}$ and $\mc{R}^{s}_{\mc{L}_{R}}\rightarrow M^{s}_{R,\mc{L}_{R}}$ are $\mr{PGL}(V)$-bundles in the fppf topology (see \cite[Lemma $6.3$]{M}),
it implies $\phi^{s}_{\mc{L}_{R}}$ is an isomorphism.
Therefore, we have the following diagram,
\[\begin{diagram}
 M^{s}_{R,\mc{L}_{R}} \simeq N_{R,\mc{L}_{R}} &\rTo & M^{s}_{R}\\
     \dTo &\square &\dTo^{\det}\\
     \msp(R) &\rTo^{[\mc{L}_{R}]} & \mr{Pic}(X_{R})\\
\end{diagram}\]

\noindent Finally by \cite[Theorem $4.3.1$]{HL} we conclude that the functor $\mc{M}^{s}_{R,\mc{L}_{R}}$ is universally corepresented by the $R$-scheme $M^{s}_{R,\mc{L}_{R}}$.
\end{proof}

\begin{rem}\label{mklcorep}
Note that the  functor $\m_{R,\mc{L}_{R}}$ is corepresented by a projective $R$-scheme, denoted $M_{R,\mc{L}_{R}}$ of finite type. Recall the proof of \cite[Theorem $3.1$]{EL}.
Since $X_{R}$ is smooth,  using \cite[Theorem $2.1.10$]{HL}, we can define a morphism $\mr{det}': \mr{Quot}_{X_{R}}(\mc{H},P) \to \mr{Pic}(X_{R})$ mapping a coherent sheaf on $X_R$ to its determinant bundle. Denote by $A$ the (scheme-theoretic) intersection of $\mr{det}^{'-1}(\mc{L}_{R})$ and $Q$, where  $Q$ as in the proof of \cite[Theorem $3.1$]{EL}. Then the statement follows after replacing $Q$ by $A$ in the proof of \cite[Theorem $3.1$]{EL}.  
\end{rem}




\section{Deformation of moduli spaces with fixed determinant}\label{deform}

Keep Notations \ref{n}. 
We have seen in the proof of Proposition \ref{msklcorep} how $M^{s}_{R,\mc{L}_R}$ can be considered as the fiber of the determinant morphism $\mr{det}: M^{s}_{R} \to \mr{Pic}(X_{R})$ over the point corresponding to $\mc{L}_R$.
Using the trace map (see Definition \ref{trmap}), we relate the obstruction theory of the deformation functor at a point in the moduli space $M^{s}_{R}$ to the obstruction theory of 
the deformation functor at a point in the moduli space $\mr{Pic}(X_{R})$.    
We use this (see Theorem \ref{artkn}) to show that the deformation functor at a point in the moduli space $M^{s}_{R,\mc{L}_R}$ is unobstructed.

\vspace{0.4 cm}
We begin by recalling some basic definitions. 

\begin{note}\label{nfd}
We denote by $\mr{Art}/R$ the category of local artinian $R$-algebras with residue field $k$.
Denote by  $X_k :=X_R \times_{\msp(R)} \msp (k)$ and  $X_A := X_R \times_{\msp(R)} \msp(A)$.
Let $[\mc{F}_k]$ denote  a closed point of $M^s_R$. As $M^s_R \to \msp(R)$ is a morphism of finite type, the closed points of the moduli space $M^s_R$ are $k$-points.
Since $k$ is algebraically closed, by \cite[Theorem $3.1$]{EL} we have a bijection 

\[ \theta(k): \mc{M}_{R}(k) \to \mr{Hom}_{R}(k, M_{R}). \]          

Therefore to a closed point of $M^s_R$ say $[\mc{F}_{k}]$, we can associate a Gieseker stable sheaf $\mc{F}_{k}$ on the curve $X_k$.
Since the curve $X_{k}$ is smooth, the torsion-free sheaf is in fact locally free. 
\end{note}


\vspace{0.2 cm}
\noindent We define a covariant functor at the point $[\mc{F}_{k}]$ in $M^s_R$.

\begin{defi}\label{defdfk}
We define the deformation functor $\mc{D}_{[\mc{F}_{k}]}:  \mr{Art}/R \to (\mr{Sets})$, such that for $A \in \mr{Art}/ R$ 
\[  \mc{D}_{[F_{k}]}(A) : = \left\{ \begin{array}{l}
                                       \mbox{ coherent  sheaves } \mc{F}_A \mbox{ with Hilbert polynomial } P \\
                                       \mbox{ on }X_A \mbox{ flat over} A \mbox{ such that its pull-back to } X_k \\
                                       \mbox{ is isomorphic to } \mc{F}_{k}.
                                       \end{array} \right\} \] 
\end{defi}

\vspace{0.2 cm}
\noindent Similarly, we define a covariant functor at the point $[\mr{det}(\mc{F}_{k})]$ of the moduli space $\mr{Pic}(X_{R})$. 

\begin{defi}\label{defdfd} 
Let $\mc{D}_{[\mr{det}(F_{k})]}:\mr{Art}/R \to (\mr{Sets})$ be a covariant functor such that for $A \in \mr{Art}/R$
 \[ \mc{D}_{[\mr{det}(F_{k})]}(A) := \left\{ \begin{array}{l}
                                       \mbox{ coherent  sheaves } \mc{F}_A \mbox{ with Hilbert polynomial the } \\
                                       \mbox{ same as }\mr{det}(\mc{F}_k) \mbox{ on } X_A \mbox{ flat over} A \mbox{ such that its } \\
                                      \mbox{ pull-back to }  X_{k} \mbox{ is isomorphic to } \mr{det}(\mc{F}_k).
                                       \end{array} \right\} \]                                          
\end{defi}

\vspace{0.2 cm}
The following theorem gives the obstruction theories of $\mc{D}_{[F_{k}]}$ and $\mc{D}_{[\mr{det}(F_{k})]}$.

Using this we prove the following corollary.

\begin{rem}\label{unobs1}
By {\cite[Theorem $7.3$]{HD}}] the functors $\mc{D}_{[\mc{F}_{k}]}$ and $\mc{D}_{[\mr{det}(F_{k})]}$ have obstruction theories in the groups 
$H^2(\Hc_{X_{k}}(\mc{F}_{k},\mc{F}_{k})\otimes_k I)$ and $H^2(\Hc_{X_{k}}(\det (\mc{F}_{k}),\det (\mc{F}_{k}))\otimes_k I)$ respectively.
For $X_{k}$ a curve, by Grothendieck vanishing theorem, $H^2(\Hc_{X_{k}}(\mc{F}_{k},\mc{F}_{k})\otimes_k I)$ and $H^2(\Hc_{X_{k}}(\det (\mc{F}_{k}),\det (\mc{F}_{k}))\otimes_k I)$ vanish.
Therefore, $\mc{D}_{[\mc{F}_{k}]}$ and $\mc{D}_{[\mr{det}(F_{k})]}$ are unobstructed.
\end{rem}

\vspace{0.2 cm}
\noindent Now we define a natural transformation between the two deformation functors.

\begin{defi}\label{ntdet}
By assumption $\mc{F}_{k}$ is a locally-free $\mc{O}_{X_{R}}$ module.
Moreover, by \cite[Exercise $7.1$]{HD} any coherent sheaf $\mc{F}_{A}$ on $X_{A}$ satisfying the property $\mc{F}_{A}\otimes_{\mc{O}_{X_{A}}}\mc{O}_{X_{k}}\simeq \mc{F}_{k}$ is a locally free $\mc{O}_{X_{A}}$-module. Therefore, the notion of determinant is well-defined for any coherent sheaf on $X_{A}$ which pulls back to $\mc{F}_{k}$.

\vspace{0.2 cm}
\noindent We define a natural transformation $\mc{D}et: \mc{D}_{[\mc{F}_{k}]}\to \mc{D}_{[\mr{det}(F_{k})]}$ such that for  $A \in \mr{Art}/ R$,
\[ \mc{D}et_{A} :\mc{D}_{[F_{k}]}(A) \to \mc{D}_{[\mr{det}(F_{k})]}(A),  \ \ \mc{E}_{A} \mapsto \mr{det}(\mc{E}_{A}). \]
\end{defi}

\vspace{0.2 cm}
\noindent Using this we define a deformation functor at a point in the moduli space $M^{s}_{R, \mc{L}_{R}}$.

\begin{defi}\label{defdfl} 
Let $\mc{L}_R$ be as in Notation \ref{n}. 
For $A$ a $R$-algebra, denote by $\mc{L}_{A}$ the pullback $p_{A}^{*}{\mc{L}_{R}}$ under the natural morphism $p_A: X_A \to X_{R}$.

\vspace{0.2 cm}
\noindent We define a functor $\mc{D}_{[\mc{F}_{k}],[\mr{det} \mc{F}_{k}]} :\mr{Art}/R \to (\mr{Sets})$, such that for $A \in \mr{Art}/R$. 
 \[ \mc{D}_{[\mc{F}_{k}],[\mr{det} \mc{F}_{k}]}(A) := \mc{D}et_{A}^{-1}(\mc{L}_{A}).\]   
\end{defi}


\vspace{0.2 cm}

\begin{para}\label{b4glue}
\emph{Group action on the torsors}: By \cite[Theorem $7.3$]{HD}, the set $\mc{D}_{[F_{k}]}(A')$ (respectively $\mc{D}_{[\mr{det}(F_{k})]}(A')$) is a torsor under the action of $H^1(\Hc_{X_{k}}(\mc{F}_{k},\mc{F}_{k})\otimes_k I)$ (respectively $H^1(\Hc_{X_{k}}(\det (\mc{F}_{k}),\det (\mc{F}_{k}))\otimes_k I)$).

\vspace{0.2 cm}
Since $X_k$ is noetherian, we can identify the sheaf cohomology $H^{1}(X_{k},\Hc(\mc{F}_{k},\mc{F}_{k})\otimes_{k} I)$ with the Cech cohomology $\check{H}^1(\mc{U},\Hc(\mc{F}_{k},\mc{F}_{k})\otimes_{k} I)$, 
where $\mc{U}$ is an affine open covering of $X_{k}$.
Then an element, say $\xi$ of the cohomology group $H^{1}(\Hc(\mc{F}_{k},\mc{F}_{k})\otimes_{k} I)$ can be seen as a collection of elements $\{\phi'_{ij}\} \in \Gamma(U_i \cap U_j, \Hc(\mc{F}'_{k},\mc{F}'_{k}))$ 
satisfying the cocycle condition i.e. for any $i, j,k$, we have $\phi'_{ik}|_{U_{ijk}} = \phi'_{jk}|_{U_{ijk}} + \phi'_{ij}|_{U_{ijk}}$.
Since $I$ is a $k$-vector space, $\check{H}^1(\mc{U},\Hc(\mc{F}_{k},\mc{F}_{k})\otimes_{k} I) \simeq \check{H}^1(\mc{U},\Hc(\mc{F}_{k},\mc{F}_{k}))\otimes_{k} I$.
Therefore, $\{\phi'_{ij}\}_{i,j}$ is of the form $\{\phi''_{ij}\otimes a\}_{i,j}$ for $a \in I$ not depending on $i,j$ and $\phi''_{ij} \in \Gamma(U_i \cap U_j,\Hc(\mc{F}_{k},\mc{F}_{k}))$ satisfying 
$\phi''_{ik}|_{U_{ijk}} = \phi''_{jk}|_{U_{ijk}} + \phi''_{ij}|_{U_{ijk}}$.

\vspace{0.2 cm}

Let $\mc{F}_{A'}$ be an extension of $\mc{F}_{A}$ on $X_{A'}$ i.e an element of $\mc{D}_{[\mc{F}_{k}]}(A')$. Since it is locally free, there exists a covering $\mc{U}' = \{U'_{i}\}$ of $X_{A'}$ by such that $\mc{F}_{A'}|_{U'_{i}}$ is $\mo_{U'_{i}}$-free.
Denote by $\mc{U}:= \{U_{i}\}$ the cover of $X_{k}$ where $U_{i}:= U'_{i} \cap X_{k}$. 
We know from the proof of {\cite[Theorem $7.3$]{HD}} that $\mc{F}_{A'}(\xi)$ is given by a collection of sheaves $\mc{F}'_i:= \mc{F}_{A'}|_{U'_i}$ and isomorphisms $\phi_{ij}:\mc{F}'_i|_{U_i \cap U_j} \to \mc{F}'_j|_{U_i \cap U_j}$ such that 
\[\phi_{ii}=\mr{Id}, \, \, \phi_{ij}:\mc{F}'_i|_{U'_i \cap U'_j}=\mc{F}_{A'}|_{U'_i \cap U'_j} \xrightarrow{\mr{Id}+((\phi_{ij}'' \otimes a) \circ \pi)} \mc{F}_{A'}|_{U'_i \cap U'_j} =\mc{F}'_j|_{U'_i \cap U'_j}\] 
\noindent where $\phi_{ij}'', a$ 
are as above and $\pi$ is the natural restriction morphism $\mc{F}_{A'} \to \mc{F}_{k}$.
Then by \cite[Ex. II.$1.22$]{H},  $\mc{F}_{A'}(\xi)$ glues to a sheaf if the morphisms $\{\phi_{ij}\}$ satisfy the cocycle condition.
In the following lemma we prove that this is indeed the case.
\end{para}


\begin{lem}\label{actgluelem}
Let $\mc{F}'_{i}$ and $\phi_{ij}$ be as above.
The morphisms $\{\phi_{ij}\}$ satisfy the cocycle condition i.e. for any $i,j,k  \ \phi_{ik} = \phi_{jk}\circ \phi_{ij}$.
\end{lem}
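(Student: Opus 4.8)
The plan is to verify the cocycle condition by a direct computation modulo the square-zero ideal $I=\ker(A'\to A)$. The key point is that each transition map has the shape $\phi_{ij}=\mathrm{Id}+(\phi''_{ij}\otimes a)\circ\pi$, where $\pi:\mc{F}_{A'}\to\mc{F}_k$ is the restriction, $a\in I$, and $\phi''_{ij}\in\Gamma(U_i\cap U_j,\Hc(\mc{F}_k,\mc{F}_k))$. Because $a\in I$ and $I^2=0$ in $A'$ (the square-zero extensions are the only ones that matter for an obstruction/torsor computation), the composite $(\phi''_{ij}\otimes a)\circ\pi$ kills $I\cdot\mc{F}_{A'}$; equivalently, precomposing any such correction term with another such correction term gives zero. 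So when I expand
\[
\phi_{jk}\circ\phi_{ij}=\bigl(\mathrm{Id}+(\phi''_{jk}\otimes a)\circ\pi\bigr)\circ\bigl(\mathrm{Id}+(\phi''_{ij}\otimes a)\circ\pi\bigr),
\]
the cross term $(\phi''_{jk}\otimes a)\circ\pi\circ(\phi''_{ij}\otimes a)\circ\pi$ vanishes, leaving $\mathrm{Id}+\bigl((\phi''_{ij}+\phi''_{jk})\otimes a\bigr)\circ\pi$ on $U_i\cap U_j\cap U_k$.

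Next I would invoke the cocycle condition satisfied by $\xi$ itself, namely $\phi''_{ik}|_{U_{ijk}}=\phi''_{jk}|_{U_{ijk}}+\phi''_{ij}|_{U_{ijk}}$, which is exactly the hypothesis that $\{\phi''_{ij}\otimes a\}$ represents a class in $\check H^1(\mc{U},\Hc(\mc{F}_k,\mc{F}_k))\otimes_k I$. Substituting this into the expression from the previous step gives $\phi_{jk}\circ\phi_{ij}=\mathrm{Id}+(\phi''_{ik}\otimes a)\circ\pi=\phi_{ik}$ on $U_i\cap U_j\cap U_k$, which is the desired identity. The normalization $\phi_{ii}=\mathrm{Id}$ follows since one may take $\phi''_{ii}=0$, and then $\phi_{ij}\circ\phi_{ji}=\mathrm{Id}$ follows from the $i,j,i$ case together with $\phi_{ii}=\mathrm{Id}$; I would remark on this to confirm the $\phi_{ij}$ are genuine isomorphisms, so that by \cite[Ex. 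II.$1.22$]{H} the local pieces $\mc{F}'_i$ glue to a coherent sheaf $\mc{F}_{A'}(\xi)$ on $X_{A'}$.

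The only subtlety — and the step I expect to require the most care — is bookkeeping about \emph{where} the maps $\pi$ and the tensor factor $a$ live: $\pi$ is the restriction $\mc{F}_{A'}\to\mc{F}_k$, the $\phi''_{ij}$ are endomorphisms of $\mc{F}_k$ over $U_i\cap U_j$, and one must check that $(\phi''_{ij}\otimes a)\circ\pi$ does define an endomorphism of $\mc{F}_{A'}|_{U'_i\cap U'_j}$ (it does, because $I\cdot\mc{F}_{A'}\simeq\mc{F}_k\otimes_k I$ as the square-zero ideal acts through the residue field), and that composing two of them is zero for the reason above. Once this identification is made explicit on overlaps, the computation is the short two-line expansion indicated, and I would present it as such rather than belaboring the module-theoretic identifications.
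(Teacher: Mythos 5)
Your argument is correct and is essentially the paper's own proof: both expand $\phi_{jk}\circ\phi_{ij}$, kill the cross term using the square-zero structure (the paper phrases this as $a^{2}=0$, you as $\pi$ annihilating $I\cdot\mc{F}_{A'}$, which is the same point), and then invoke the \v{C}ech cocycle identity $\phi''_{ik}=\phi''_{ij}+\phi''_{jk}$. The only cosmetic difference is that the paper carries out the computation on basis sections $s^{i}_{t}$ while you work at the level of operator composition, and your added remarks on $\phi_{ii}=\mathrm{Id}$ and invertibility are a harmless bonus.
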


\begin{proof}
It suffices to prove this equality for the basis elements, say $s^{i}_{1},\dots, s^{i}_{r}$ generating $\mc{F}'_{i}|_{U'_{i}\cap U'_{j} \cap U'_{k}}$.
For any basis element $s^i_{t}$,                                 
\begin{eqnarray*}
\phi_{jk} \circ \phi_{ij}(s^i_{t}) & = & \phi_{jk}(\mr{Id} + (a \otimes \phi''_{ij}))(\pi(s^i_{t}))\\
& = & \phi_{jk}(\pi(s^i_{t}) + a\phi''_{ij}(\pi(s^i_{t}))) = (\mr{Id} + a \otimes \phi''_{jk})(\pi(s^i_t))+ a\phi''_{ij}(\pi(s_{t}^{i}))\\
& = & \pi(s^i_t) + a \phi''_{ij}(\pi({s}_{t}^i)) +  a \phi''_{jk}(\pi(s_{t}^{i})) + 0
\end{eqnarray*}

\noindent because $a^{2}= 0$ in $A'$.
Since $\phi''_{ik} = \phi''_{ij} + \phi''_{jk}$,  we have 
\[\phi_{jk}\circ \phi_{ij} (s_{t}^{i}) = \pi(s_{t}^{i})+ a(\phi''_{ik}(\pi(s_{t}^{i})) = \phi_{ik}(s_{t}^{i}). \]
This shows that $\{\phi_{ij}\}_{i,j}$ satisfy the cocycle condition.
\end{proof}

\noindent Using this we conclude that $\mc{F}_{A'}(\xi)$, obtained by glueing the sheaves $\mc{F}'_i$ along the isomorphism $\phi_{ij}$ is a sheaf.

\vspace{0.2 cm}
Similarly,  an element say $\xi'$ in $H^1(\Hc_{X_{k}}(\det (\mc{F}_{k}),\det (\mc{F}_{k}))\otimes_k I)$ acts on an element in $\mc{D}_{[\mr{det}(\mc{F}_k)]}(A')$, 
say $\mr{det}(\mc{F}_{A'})$ to produce a line bundle $\mr{det}(\mc{F}_{A'})(\xi')$ given by a family of sheaves $\{\mc{L}_{i} := \mc{L}_{A'}|_{U_{i}}\}$ and isomorphisms 
\[\phi_{ij}:\mc{L}_{i}|_{U'_{ij}} \xrightarrow{\mr{Id}+((\phi_{ij}'' \otimes a) \circ \pi)} \mc{L}_{j}|_{U'_{ij}}\] 
where $\phi''_{ij} \in \Gamma(U_i \cap U_j,\Hc(\mr{det}(\mc{F}_{k}),\mr{det}(\mc{F}_{k}))\otimes_k I)$ is the collection of sections corresponding to $\xi'$ 
given by the ismomorphism $H^1(\Hc_{X_{k}}(\det (\mc{F}_{k}),\det (\mc{F}_{k}))\otimes_k I) \simeq \check{H}^1(\mc{U},\Hc(\mr{det}(\mc{F}_{k}),\mr{det}(\mc{F}_{k})))\otimes_{k} I$.
Again by Lemma \ref{actgluelem}, $\mr{det}(\mc{F}_{A'})(\xi')$ is a sheaf.

\begin{defi}\label{d10} 
We have the following definitions.
\begin{enumerate}
\item  We define  a map  
\[\phi_1:H^1(\Hc_{X_{k}}(\mc{F}_{k},\mc{F}_{k})\otimes_k I) \to \mc{D}_{[\mc{F}_{k}]}(A'),   \ \ \  \xi \mapsto  \mc{F}_{A'}(\xi)\]

\noindent which uniquely associates an extension $\mc{F}_{A'}(\xi)$ of $\mc{F}_{A'}$ (using Lemma \ref{actgluelem}) to an element $\xi$ of $H^1(\Hc_{X_{k}}(\mc{F}_{k},\mc{F}_{k})\otimes_k I)$.

\item  Replacing $\mc{F}_{A'}$ by $\mr{det}(\mc{F}_{A'})$ and starting with $\mr{det}(\mc{F}_{A'})$ we associate an extension say $(\mr{det}(\mc{F}_A'))(\xi')$ 
to an element $\xi'$ of $H^1(\Hc_{X_{k}}(\det (\mc{F}_{k}),\det(\mc{F}_{k}))\otimes_k I)$.
Hence we define a map
\[\phi_2:H^1(\Hc_{X_{k}}(\det (\mc{F}_{k}),\det (\mc{F}_{k}))\otimes_k I) \to \mc{D}_{\det (\mc{F}_{k})}(A'), \ \ \  \xi' \mapsto  \mr{det}(\mc{F}_A')(\xi') \]
\end{enumerate}
\end{defi}

\begin{rem}\label{defr}
Note that by Corollary \ref{unobs1}, there exist surjective morphisms $r_1: \mc{D}_{\mc{F}_{k}}(A') \twoheadrightarrow \mc{D}_{\mc{F}_{k}}(A)$ and $r_2: \mc{D}_{\det (\mc{F}_{k})}(A') \twoheadrightarrow \mc{D}_{\det (\mc{F}_{k})}(A)$. 
By \cite[Theorem $7.3$]{HD}, $r_1^{-1}(\mc{F}_{A}) = \mr{Im}(\phi_{1})$, $r_2^{-1}(\mr{det}(\mc{F}_{A})) = \mr{Im}(\phi_{2})$.
\end{rem}

\vspace{0.2 cm}
The following lemma tells us that taking the determinant commutes with glueing of the sheaf.  

\begin{lem}\label{detglue}
The determinant of the sheaf $\mc{F}_{A'}(\xi)$ is the line bundle obtained by glueing $\{\det (\mc{F}'_i)\}$ along the isomorphisms 
\[\ov{\phi}_{ij}:\det (\mc{F}'_i)|_{U_i \cap U_j} \to \det (\mc{F}'_j)|_{U_i \cap U_j}, \, \, s_1^{(i)} \wedge ... \wedge s_r^{(i)} \mapsto  \phi_{ij}(s_1^{i}) \wedge ... \wedge \phi_{ij}(s_r^{i})  \]
\noindent where $s_1^{i},...,s_r^{i}$ are the basis elements of $\mc{F}'_i|_{U_i \cap U_j}$.
    \end{lem}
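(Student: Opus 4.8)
The plan is to verify the claimed description of $\det(\mc{F}_{A'}(\xi))$ by a direct local computation, using the fact that the determinant of a locally free sheaf is computed as the top exterior power and hence commutes with isomorphisms on each open set of a trivializing cover. First I would recall from \ref{b4glue} and Lemma \ref{actgluelem} that $\mc{F}_{A'}(\xi)$ is constructed by glueing the free modules $\mc{F}'_i = \mc{F}_{A'}|_{U'_i}$ along the isomorphisms $\phi_{ij} = \mr{Id} + ((\phi''_{ij}\otimes a)\circ\pi)$, which satisfy the cocycle condition. Taking $\det = \bigwedge^r$ of each $\mc{F}'_i$ and of each $\phi_{ij}$ gives line bundles $\det(\mc{F}'_i)$ together with transition isomorphisms $\ov{\phi}_{ij}:= \bigwedge^r \phi_{ij}$, and since $\bigwedge^r$ is a functor, the cocycle identity $\phi_{ik} = \phi_{jk}\circ\phi_{ij}$ on $U'_i\cap U'_j\cap U'_k$ immediately yields $\ov{\phi}_{ik} = \ov{\phi}_{jk}\circ\ov{\phi}_{ij}$; thus the data $(\{\det(\mc{F}'_i)\},\{\ov{\phi}_{ij}\})$ glues to a well-defined line bundle $\mc{G}$ on $X_{A'}$.

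Next I would show $\mc{G}\simeq \det(\mc{F}_{A'}(\xi))$. The key point is that the formation of the determinant of a coherent sheaf (Definition as in the paper, via a locally free resolution) is local and compatible with restriction to open subsets: on $U'_i$ the sheaf $\mc{F}_{A'}(\xi)$ restricts to $\mc{F}'_i$, which is free of rank $r$, so $\det(\mc{F}_{A'}(\xi))|_{U'_i} = \det(\mc{F}'_i)$ canonically. Under the glueing isomorphism defining $\mc{F}_{A'}(\xi)$, the identification $\mc{F}_{A'}(\xi)|_{U'_i\cap U'_j}\cong \mc{F}'_i|_{U'_i\cap U'_j}$ versus the one via $U'_j$ differ precisely by $\phi_{ij}$; applying $\det$ and using that $\det$ of the glueing datum of $\mc{F}_{A'}(\xi)$ is the glueing datum obtained by applying $\det$ termwise, we get that $\det(\mc{F}_{A'}(\xi))$ has transition functions $\bigwedge^r\phi_{ij} = \ov{\phi}_{ij}$. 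Hence $\det(\mc{F}_{A'}(\xi))$ and $\mc{G}$ have the same local pieces and the same transition isomorphisms, so they are isomorphic. I would also spell out the formula for $\ov{\phi}_{ij}$ on decomposable sections: $s_1^{(i)}\wedge\cdots\wedge s_r^{(i)}\mapsto \phi_{ij}(s_1^i)\wedge\cdots\wedge\phi_{ij}(s_r^i)$, which is just the definition of $\bigwedge^r\phi_{ij}$ on a basis, and note that this extends uniquely since the decomposable wedge of a basis generates the rank-one module $\det(\mc{F}'_i)$.

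The only genuinely delicate point — and the one I would treat most carefully — is the compatibility between the notion of determinant via a global locally free resolution (as defined earlier in the paper) and the naive ``top exterior power'' when the sheaf happens to be locally free; concretely, one must check that $\det$ commutes with restriction to the $U'_i$ and that the glueing isomorphisms it produces are exactly $\bigwedge^r$ of the original ones, rather than these differing by some sign or unit. This follows because for a locally free sheaf the trivial one-term resolution computes $\det$, both constructions agree on each $U'_i$, and a morphism of locally free sheaves induces the same map on determinants whether one uses exterior powers or functoriality of the resolution-based definition; since everything is checked on the affine cover $\mc{U}'$ where all sheaves are free, there is no ambiguity. A short remark to this effect, combined with the functoriality argument above, completes the proof.
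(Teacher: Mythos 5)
Your proposal is correct and follows essentially the same route as the paper's proof: the paper likewise verifies the cocycle condition for the $\ov{\phi}_{ij}$ by computing on wedges of basis elements (i.e.\ functoriality of $\bigwedge^r$ made explicit), and then identifies the glued line bundle with $\det(\mc{F}_{A'}(\xi))$ via the local isomorphisms $\psi_i$ and the uniqueness of glueing from \cite[Ex.~II.$1.22$]{H}. Your additional remark on reconciling the resolution-based definition of $\det$ with the top exterior power for locally free sheaves is a point the paper leaves implicit, but it does not change the argument.
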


\begin{proof}
By Lemma \ref{actgluelem} for all $t = 1,\dots, r$, we have  $\phi_{ik}(s_{t}^{i}) = \phi_{jk}(s^i_{t}) \circ \phi_{ij}(s_{t}^{i})$.
Then, 
\begin{eqnarray*}
 \ov{\phi}_{jk} \circ \ov{\phi}_{ij}(s_{1}^i \wedge \dots \wedge s^{i}_{r}) & = & \ov{\phi}_{jk}(\phi_{ij}(s_{1}^i) \wedge \dots \wedge \phi_{ij}(s_{r}^i))\\
 & = & (\phi_{jk} \circ \phi_{ij}(s_{1}^{i})) \wedge \dots \wedge (\phi_{jk} \circ \phi_{ij}(s_{r}^{i})) \\
 & = & \phi_{ik}(s_{1}) \wedge \dots \wedge \phi_{ik}(s_{r}^{i}) \\
 & = & \ov{\phi}_{ik}(s_{1}^i \wedge \dots \wedge s_{r}^{i}) \\
\end{eqnarray*}

\noindent Hence the morphisms $\{\ov{\phi}_{ij}\}$ satisfy the cocycle condition i.e $\ov{\phi}_{ik} = \ov{\phi}_{jk} \circ \ov{\phi}_{jk}$. 

\vspace{0.2 cm} By Lemma \ref{actgluelem}, there exist isomorphisms $\psi_{i}: \mc{F}_{A'}(\xi)|_{U'_{i}} \simeq \mc{F}'_{i}$ satisfying $\psi_{j}|_{U_{ij}} = \phi_{ij} \circ \psi_{i}|_{U_{ij}}$.  
We define $\ov{\psi}_{i}: \mr{det}(\mc{F}_{A'}(\xi))|_{U'_{i}} \simeq \mr{det}(\mc{F}'_{i})$ as follows.
Let $s^ i_{1}, \dots , s_{r}^i$ be the basis of $\mc{F}_{A'}(\xi)|_{U_{i}}$.
Then $\ov{\psi}_{i}(s_{1}^{i} \wedge \dots \wedge s_{r}^{i}) := \psi_{i}(s_{1}^{i}) \wedge \dots \wedge \psi_{i}(s_{r}^{i})$. Therefore
 
\begin{eqnarray*}
\ov{\phi}_{ij} \circ \ov{\psi}_{i} (s_{1}^{i} \wedge \dots \wedge s_{r}^{i})  & = & \ov{\phi}_{ij} (\psi_{i}(s_{1}^{i}) \wedge \dots \wedge \psi_{i} (s_{r}^{i}) ) \\ 
& = & \phi_{ij} (\psi_{i} (s_{1}^{i} )) \wedge \dots \wedge \phi_{ij} (\psi_{i} (s^i_r) ) \\
& = & \psi_{j} (s_{1}^{i}) \wedge \dots \wedge \psi_{j} (s_{r}^{i})\\
& = & \ov{\psi}_{j} (s_{1} \wedge \dots \wedge s_{r}^{i})
\end{eqnarray*}

\noindent Then by the uniqueness of glueing mentioned in \cite[Ex. II.$1.22$]{H}, $\{ \mr{det}(\mc{F}_{i})\}$ glues along the isomorphisms $\{\ov{\phi}_{ij}\}_{i,j}$ to $\mr{det}(\mc{F}_{A'})(\xi)$. 
\end{proof}
 
\vspace{0.4 cm}
\begin{para} We relate the obstruction theory of $\mc{D}_{[F_k]}$ to that of $\mc{D}_{[\mr{det}(F_{k})]}$ by relating the action of the group $H^1(\Hc_{X_{k}}(\mc{F}_{k},\mc{F}_{k})\otimes_k I)$ on the vector bundle to the action of the group $H^1(\Hc_{X_{k}}(\det (\mc{F}_{k}),\det (\mc{F}_{k}))\otimes_k I)$ on the determinant of the vector bundle.
This relation is given by the trace map which we recall here.
\end{para}



\begin{defi}\label{trmap}
Let $U$ be an affine open set on which $\mc{F}_{k}$ is free, generated by sections say $s_1,...,s_r$ (for $r=\mr{rk}(F_{k})$). Recall the map,
 
   \[\mr{tr}_U:\mc{H}om(\mc{F}_{k},\mc{F}_{k})(U) \to \mc{H}om(\det (\mc{F}_{k}),\det (\mc{F}_{k}))(U), \ \ \  \ (\ast) \]
\[\phi \mapsto \mr{tr}_U(\phi):= ( s_1 \wedge ... \wedge s_r \mapsto \sum_j s_1 \wedge ..\wedge \phi(s_j) \wedge ... \wedge s_r ).\]

\noindent Let $\mc{U}:=\{U_i\}$ be a small enough open cover of $X_k$ such that $\mc{F}_{k}$ is free on each $U_i$. Then the \emph{trace map} is given by 
 
\[\mr{tr}: \Hc_{X_{k}}(\mc{F}_{k},\mc{F}_{k}) \to \Hc_{X_{k}}(\det (\mc{F}_{k}),\det (\mc{F}_{k})) \]

\noindent such that  $\mr{tr}|_{U_i} = \mr{tr}_{U_{i}}$ for any affine open set $U_i$ of $X_k$.
 
\end{defi}

\vspace{0.2 cm}  
\begin{rem}\label{remontr}
 Note that the morphism $\mr{tr}_U$ is $\mc{O}_{X_k}$ linear.
Let $f \in \mc{O}_{X_k}(U)$.
Then 
\begin{eqnarray*}
 \mr{tr}_U(f\phi) & = & s_1 \wedge ... \wedge s_r \mapsto \sum_j s_1 \wedge ..\wedge f\phi(s_j) \wedge ... \wedge s_r \\
                   & = & \sum_j f(s_1 \wedge ..\wedge \phi(s_j) \wedge ... \wedge s_r ) \\
                   & = & f \sum_j s_1 \wedge ..\wedge \phi(s_j) \wedge ... \wedge s_r \\
                   & = & f\mr{tr}_{U}(\phi).
\end{eqnarray*}

\end{rem}

\begin{lem}\label{trsurj}
The morphism $\mr{tr}$ is surjective.
\end{lem}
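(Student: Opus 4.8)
The plan is to verify surjectivity locally, on an affine open cover $\{U_i\}$ of $X_k$ over which $\mc{F}_k$ is free — the same kind of cover used in Definition \ref{trmap}. Since $\det(\mc{F}_k)$ is a line bundle, over such a $U_i$ the sheaf $\Hc_{X_k}(\det(\mc{F}_k),\det(\mc{F}_k))$ is canonically trivial: if $s_1,\dots,s_r$ is an $\mc{O}_{X_k}(U_i)$-basis of $\mc{F}_k|_{U_i}$, then $s_1\wedge\cdots\wedge s_r$ trivializes $\det(\mc{F}_k)|_{U_i}$, and every endomorphism of $\det(\mc{F}_k)$ over $U_i$ is multiplication by a unique $f\in\mc{O}_{X_k}(U_i)$. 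Recalling that a morphism of sheaves is surjective exactly when every local section of the target lifts after passing to an open cover, it therefore suffices to produce, for each $i$ and each $f\in\mc{O}_{X_k}(U_i)$, an endomorphism $\phi\in\Hc_{X_k}(\mc{F}_k,\mc{F}_k)(U_i)$ with $\mr{tr}_{U_i}(\phi)=f$ under this identification.

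Next I would write down such a $\phi$ explicitly. The naive candidate $\phi=\tfrac{f}{r}\,\mr{Id}$ is unavailable, because $X_k$ has characteristic $p>0$ and $p$ may divide $r$; this is the only subtle point in the argument. Instead, using the basis $s_1,\dots,s_r$, define $\phi$ by $\phi(s_1)=f s_1$ and $\phi(s_j)=0$ for $j\geq 2$, extended $\mc{O}_{X_k}(U_i)$-linearly. Substituting into the defining formula $(\ast)$ of $\mr{tr}_{U_i}$, every summand with index $j\geq 2$ contains the factor $\phi(s_j)=0$ and hence vanishes, while the $j=1$ summand equals $\phi(s_1)\wedge s_2\wedge\cdots\wedge s_r=f\,(s_1\wedge\cdots\wedge s_r)$. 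Thus $\mr{tr}_{U_i}(\phi)$ is multiplication by $f$, i.e. corresponds to $f$ under the trivialization above.

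Finally I would conclude: since the $U_i$ cover $X_k$ and the construction can be carried out on any open subset of a $U_i$ on which $\mc{F}_k$ is free, every local section of $\Hc_{X_k}(\det(\mc{F}_k),\det(\mc{F}_k))$ lifts locally through $\mr{tr}$, so the induced maps on stalks are surjective and $\mr{tr}$ is a surjective morphism of $\mc{O}_{X_k}$-modules. I do not expect a genuine obstacle here: once the explicit endomorphism $\mathrm{diag}(f,0,\dots,0)$ is chosen to sidestep the non-invertibility of $r$ in characteristic $p$, the remaining verification is the direct computation with $(\ast)$ indicated above.
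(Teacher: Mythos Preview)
Your argument is correct and matches the paper's proof in all essentials: the paper also passes to a local situation where $\mc{F}_k$ is free (it works on stalks rather than sections), uses $\Hc(\det\mc{F}_{k},\det\mc{F}_{k})\cong\mc{O}_{X_k}$, and exhibits the projector $\phi(s_1)=s_1$, $\phi(s_j)=0$ for $j\ge 2$ as a preimage of $\mr{Id}$, then invokes $\mc{O}_{X_k}$-linearity of $\mr{tr}$ to hit every $f$. Your version folds the linearity step into the construction by taking $\mr{diag}(f,0,\dots,0)$ directly, but the key idea---avoiding division by $r$ via a rank-one diagonal endomorphism---is identical.
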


\begin{proof}
It suffices to prove surjectivity on the level of stalks.
Let $x \in X_{k}$ be a closed point. 
Consider the induced morphism \[\mr{tr}_x:\Hc_{X_{k}}(\mc{F}_{k,x},\mc{F}_{k,x}) \to \Hc_{X_{k}}(\det(\mc{F}_{k,x}),\det (\mc{F}_{k,x}))\] 
and basis $s_1,...,s_r \in \mc{F}_{k,x}$. 
Since the map $\mr{tr}_x$ is $\mo_{X_{k},x}$ linear and $\Hc_{\mo_{X_{k}}}(\det (\mc{F}_{k,x}),\det (\mc{F}_{k,x})) \cong \mo_{X_{k},x}$, it suffices to show that $\mr{Id} \in \mr{Im}(\mr{tr}_x)$.
Let $\phi \in \Hc_{X_{k}}(\mc{F}_{k,x},\mc{F}_{k,x})$ defined as $\phi(s_{i}) = s_{i}$ for $i = 1$ and $0$ otherwise.
This concludes the proof.
\end{proof}


We can define the trace map cohomologically as follows: 
 
\begin{defi}
Let  $\mc{U}:=\{U_i\}$ be a small enough open affine cover of $X_{k}$ such that $\mc{F}_{k}$ is free on each $U_i$. Using \cite[III. Theorem $4.5$]{H} we define \v{C}ech cocycle $\mc{C}^p(\mc{U},\mc{H}om(\mc{F}_{k},\mc{F}_{k}))$ (resp $\mc{C}^p(\mc{U},\mc{H}om(\det (\mc{F}_{k}),\det (\mc{F}_{k}))$),
such that the corresponding \v{C}ech cohomology coincides with the sheaf cohomology $H^i(X_{k},\mc{H}om(\mc{F}_{k},\mc{F}_{k}))$ (resp $H^i(X_{k},\mc{H}om(\det (\mc{F}_{k}),\det (\mc{F}_{k})))$).
\noindent The morphism $(\ast)$ of Definition induces a morphism on cohomologies
 \[\mr{tr}^i:H^i(X_{k},\mc{H}om(\mc{F}_{k},\mc{F}_{k})) \to H^i(X_{k},\mc{H}om(\det(\mc{F}_{k}),\det(\mc{F}_{k}))) \cong H^i(X_{k},\mc{O}_{X_{k}}).\]
\end{defi}

As a corollary to Lemma \ref{trsurj} we have:

\vspace{0.2 cm}
\begin{cor}\label{d11}
 The morphism induced on cohomology \[\mr{tr}^1:H^1(X_{k}, \Hc_{X_{k}}(\mc{F}_{k},\mc{F}_{k})) \to  H^1(X_{k}, \Hc_{X_{k}}(\det (\mc{F}_{k}),\det (\mc{F}_{k}))\] is surjective.
 \end{cor}

 \begin{proof}
Consider the short exact sequence, \[0 \to \ker \mr{tr} \to \Hc_{X_{k}}(\mc{F}_{k},\mc{F}_{k}) \xrightarrow{\mr{tr}} \Hc_{X_{k}}(\det (\mc{F}_{k}),\det (\mc{F}_{k})) \to 0.\]
We get the following terms in the associated long exact sequence, \[... \to H^1(X_{k}, \Hc_{X_{k}}(\mc{F}_{k},\mc{F}_{k})) \xrightarrow{\mr{tr}^1}  H^1(X_{k}, \Hc_{X_{k}}(\det (\mc{F}_{k}),\det (\mc{F}_{k}))) \to H^2(\ker \mr{tr}) \to ...\]
Since $X_k$ is a curve, by Grothendieck's vanishing theorem, $H^2(\ker(\mr{tr}))= 0$. Therefore, the morphism $\mr{tr}^1$ is surjective.
  \end{proof}

The following proposition tells us that the determinant map 'commutes' with the trace map. 

\begin{prop}\label{d6}
Notation as in \ref{b4glue}.
Let \[\mr{det}_{ij}: \Gamma(U_i \cap U_j, \Hc(\mc{F}'_i,\mc{F}'_i))\to \Gamma(U_i \cap U_j, \Hc(\mr{det} (\mc{F}'_i),\mr{det} (\mc{F}'_i)))\]  be a morphism defined by 
 \[\phi_{ij} \in \Gamma(U_i \cap U_j, \Hc(\mc{F}'_i,\mc{F}'_i)) \mapsto \mr{det}_{ij}(\phi_{ij}) := (s_1^{i} \wedge ... \wedge s_r^{i} \mapsto \phi_{ij}(s_1^{i}) \wedge ... \wedge \phi_{ij}(s_r^{i}))\] 
                                               
\noindent where $s_1^{i},...,s_r^{i}$ are the basis elements of $\mc{F}'_i|_{U_i \cap U_j}$. Then for any pair $i \neq j$, we have
\[\det_{ij} \circ (\mr{Id}+(\phi_{ij}'' \otimes a) \circ \pi)=\mr{Id}+(\mr{tr}_{U_{ij}}(\phi_{ij}'') \otimes a) \circ \pi.\]
 In other words, the following diagram is commutative:
  \[\begin{diagram}
     H^1(\Hc_{X_{k}}(\mc{F}_{k},\mc{F}_{k})\otimes_k I) &\rTo^{\phi_1} & \mc{D}_{[\mc{F}_{k}]}(A')\\
     \dTo^{\mr{tr}^1\otimes \mr{Id}} &\circlearrowleft&\dTo^{\mc{D}et_{A'}}\\
     H^1(\Hc_{X_{k}}(\det (\mc{F}_{k}),\det (\mc{F}_{k}))\otimes_k I) &\rTo^{\phi_2} &\mc{D}_{[\det (\mc{F}_{k})]}(A')\\
    \end{diagram}\]
\end{prop}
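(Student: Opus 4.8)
The plan is to verify the claimed local identity $\det_{ij} \circ (\mr{Id}+(\phi_{ij}'' \otimes a) \circ \pi)=\mr{Id}+(\mr{tr}_{U_{ij}}(\phi_{ij}'') \otimes a) \circ \pi$ by a direct computation on a basis, and then to observe that this local identity is exactly what is needed to make the square commute, given the explicit descriptions of $\phi_1$, $\phi_2$, and $\mc{D}et_{A'}$ recorded in Definitions \ref{d10} and \ref{ntdet} together with Lemma \ref{detglue}. First I would fix $i\neq j$ and a basis $s_1^i,\dots,s_r^i$ of $\mc{F}'_i$ on $U'_i\cap U'_j$, and compute $\det_{ij}$ applied to the transition isomorphism $\mr{Id}+(\phi_{ij}''\otimes a)\circ\pi$ on the wedge $s_1^i\wedge\cdots\wedge s_r^i$. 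Expanding $\bigwedge_t\bigl(s_t^i + a\,\phi_{ij}''(\pi(s_t^i))\bigr)$ and using $a^2=0$ in $A'$ to kill every term with two or more factors carrying an $a$, one is left with $s_1^i\wedge\cdots\wedge s_r^i + a\sum_t s_1^i\wedge\cdots\wedge \phi_{ij}''(\pi(s_t^i))\wedge\cdots\wedge s_r^i$, and the sum is by definition $\mr{tr}_{U_{ij}}(\phi_{ij}'')$ applied to the wedge (this is exactly the formula $(\ast)$ of Definition \ref{trmap}). Since $\pi$ on determinants is the induced restriction $\mr{det}(\mc{F}_{A'})\to\mr{det}(\mc{F}_k)$, this yields precisely $\mr{Id}+(\mr{tr}_{U_{ij}}(\phi_{ij}'')\otimes a)\circ\pi$.

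Having established the local identity, I would deduce commutativity of the diagram. Start with $\xi\in H^1(\Hc_{X_k}(\mc{F}_k,\mc{F}_k)\otimes_k I)$, represented in \v{C}ech form by a cocycle $\{\phi_{ij}''\otimes a\}$ as in \S\ref{b4glue}. By Definition \ref{d10}(1), $\phi_1(\xi)=\mc{F}_{A'}(\xi)$ is glued from $\{\mc{F}'_i\}$ along $\phi_{ij}=\mr{Id}+(\phi_{ij}''\otimes a)\circ\pi$. By Lemma \ref{detglue}, $\mc{D}et_{A'}(\mc{F}_{A'}(\xi))=\mr{det}(\mc{F}_{A'}(\xi))$ is glued from $\{\mr{det}(\mc{F}'_i)\}$ along $\ov{\phi}_{ij}=\det_{ij}(\phi_{ij})$. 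By the local identity just proven, $\ov{\phi}_{ij}=\mr{Id}+(\mr{tr}_{U_{ij}}(\phi_{ij}'')\otimes a)\circ\pi$, so the glued line bundle is exactly $\phi_2$ applied to the \v{C}ech class $\{\mr{tr}_{U_{ij}}(\phi_{ij}'')\otimes a\}$, which represents $(\mr{tr}^1\otimes\mr{Id})(\xi)$ under the identification of \v{C}ech and sheaf cohomology used to define $\mr{tr}^1$ (and which is compatible with $\otimes_k I$ since $I$ is a $k$-vector space). Hence $\mc{D}et_{A'}\circ\phi_1=\phi_2\circ(\mr{tr}^1\otimes\mr{Id})$, as desired.

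The only genuinely delicate point is bookkeeping rather than substance: one must be careful that the \v{C}ech representative of $\mr{tr}^1(\xi)$ really is $\{\mr{tr}_{U_{ij}}(\phi_{ij}'')\}$, i.e. that the cohomological trace map $\mr{tr}^1$ is computed by applying $\mr{tr}_{U_{ij}}$ term-by-term to a \v{C}ech cocycle for $\xi$ — this is immediate from the fact that $\mr{tr}$ is a morphism of sheaves and from the functoriality of the \v{C}ech complex, but it is the place where one should be explicit. I also need to note that $\phi_2$ and $\det_{ij}$ are well-defined on the relevant classes independently of the choice of local bases, which follows because $\mr{tr}_{U}$ is $\mo_{X_k}$-linear (Remark \ref{remontr}) and the cocycle conditions are respected (Lemmas \ref{actgluelem} and \ref{detglue}). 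Beyond that, the proof is the one-line basis expansion above together with the uniqueness of glueing from \cite[Ex. II.$1.22$]{H}, so I do not anticipate any real obstacle.
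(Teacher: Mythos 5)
Your proposal is correct and follows essentially the same route as the paper: the paper's proof is exactly the basis expansion of $\bigwedge_t\bigl(s_t^i+a\,\phi_{ij}''(\pi(s_t^i))\bigr)$ using $a^2=0$ to recover the trace formula $(\ast)$, with the diagram commutativity treated as an immediate consequence via Lemma \ref{detglue}. Your additional bookkeeping on the \v{C}ech representative of $\mr{tr}^1(\xi)$ is a sound elaboration of what the paper leaves implicit.
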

   
 \begin{proof} 
Let $s_1^{i},...,s_r^{i}$ be the sections generating $\mc{F}'_i|_{U_i \cap U_j}$.
\\ Any section of $\Hc(\det (\mc{F}'_i),\det (\mc{F}'_i))$ is (uniquely) defined by the image of $s_1^{i} \wedge ... \wedge s_r^{i}$.
Hence it suffices to prove \[(\mr{det}_{ij} \circ (\mr{Id}+(\phi_{ij}'' \otimes a) \circ \pi))(s_1^{i} \wedge ... \wedge s_r^{i})=(\mr{Id}+(\mr{tr}_{U_{ij}}(\phi_{ij}'') \otimes a) \circ \pi)(s_1^{i} \wedge ... \wedge s_r^{i}).\] 
For $1\leq t \leq r$, $(\mr{Id}+(\phi_{ij}'' \otimes a) \circ \pi)(s_t^{i})=s_t^{i}+a\phi_{ij}''(\pi(s_t^{i}))$ and since $I.m_{A'}=0, a^t=0$ for $t>1$.
Hence,
\[(\mr{det}_{ij} \circ (\mr{Id}+(\phi_{ij}'' \otimes a) \circ \pi))(s_1^{i} \wedge ... \wedge s_r^{i})=(s_1^{i}+a\phi_{ij}''(\pi(s_1^{i}))) \wedge ... \wedge (s_r^{i}+a\phi_{ij}''(\pi(s_r^{i})))= \]
\[=s_1^{i} \wedge ... \wedge s_r^{i}+a\sum_k s_1^{i} \wedge ... \wedge \phi_{ij}''(\pi(s_k^{i})) \wedge ... \wedge s_r^{i}=(\mr{Id}+(\mr{tr}_{U_{ij}}(\phi_{ij}'') \otimes a) \circ \pi)(s_1^{i} \wedge ... \wedge s_r^{i}).\]
This completes the proof of the proposition.
\end{proof}
 
\vspace{0.2 cm} 
We end this section with the following theorem.

\begin{thm}\label{artkn}
The functor $\mc{D}_{[\mc{F}_{k}],[\det (\mc{F}_{k})]}$ is unobstructed.
\end{thm}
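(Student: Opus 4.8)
The plan is to deduce that $\mc{D}_{[\mc{F}_{k}],[\det (\mc{F}_{k})]}$ is unobstructed from the structure we have built up: the determinant natural transformation $\mc{D}et$ is, on tangent and obstruction spaces, governed by the trace map, and the trace map is surjective on $H^1$ (Corollary \ref{d11}) while all obstruction spaces vanish since $X_k$ is a curve (Remark \ref{unobs1}). So the strategy is to show $\mc{D}et:\mc{D}_{[\mc{F}_{k}]}\to\mc{D}_{[\det(\mc{F}_{k})]}$ is \emph{formally smooth}; then its fibre $\mc{D}_{[\mc{F}_{k}],[\det(\mc{F}_{k})]}=\mc{D}et^{-1}(\mc{L}_{\bullet})$ over the (automatically smooth, hence formally smooth) functor $\mc{D}_{[\det(\mc{F}_{k})]}$ is formally smooth over $\msp(R)$, i.e. unobstructed.

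Concretely, I would take a small extension $0\to I\to A'\to A\to 0$ in $\mr{Art}/R$ with $I\cdot\mf{m}_{A'}=0$, and an element $\mc{F}_A\in\mc{D}_{[\mc{F}_{k}]}(A)$ whose determinant $\mr{det}(\mc{F}_A)\in\mc{D}_{[\det(\mc{F}_{k})]}(A)$ lifts to some $\mc{G}_{A'}\in\mc{D}_{[\det(\mc{F}_{k})]}(A')$; I must produce a lift $\mc{F}_{A'}\in\mc{D}_{[\mc{F}_{k}]}(A')$ of $\mc{F}_A$ with $\mr{det}(\mc{F}_{A'})\cong\mc{G}_{A'}$. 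Since $\mc{D}_{[\mc{F}_{k}]}$ is unobstructed (Remark \ref{unobs1}), \emph{some} lift $\mc{F}_{A'}^{0}$ of $\mc{F}_A$ exists; by Remark \ref{defr} and Definition \ref{d10}, every lift of $\mc{F}_A$ is of the form $\mc{F}_{A'}^{0}(\xi)=\phi_1(\xi)$ for $\xi\in H^1(\Hc_{X_{k}}(\mc{F}_{k},\mc{F}_{k})\otimes_k I)$, and similarly the lifts of $\mr{det}(\mc{F}_A)$ form the torsor $\mr{Im}(\phi_2)$ under $H^1(\Hc_{X_{k}}(\det(\mc{F}_{k}),\det(\mc{F}_{k}))\otimes_k I)$. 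By Lemma \ref{detglue} taking determinant sends the gluing data defining $\phi_1(\xi)$ to the gluing data with transition maps $\ov{\phi}_{ij}$, and by Proposition \ref{d6} those are exactly the transition maps produced by $\phi_2(\mr{tr}^1(\xi))$; hence $\mc{D}et_{A'}(\mc{F}_{A'}^{0}(\xi))=\mr{det}(\mc{F}_{A'}^{0})(\mr{tr}^1\otimes\mr{Id})(\xi)$, i.e. the square in Proposition \ref{d6} commutes as a map of torsors. Because $\mr{det}(\mc{F}_{A'}^{0})$ and $\mc{G}_{A'}$ both lie in the $H^1(\Hc(\det,\det)\otimes I)$-torsor over $\mr{det}(\mc{F}_A)$, there is $\eta$ with $\mc{G}_{A'}=\mr{det}(\mc{F}_{A'}^{0})(\eta)$; by surjectivity of $\mr{tr}^1$ (Corollary \ref{d11}, tensored with $I$) choose $\xi$ with $(\mr{tr}^1\otimes\mr{Id})(\xi)=\eta$, and then $\mc{F}_{A'}:=\mc{F}_{A'}^{0}(\xi)$ is the desired lift. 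This shows $\mc{D}et$ is formally smooth.

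To finish, I would invoke that $\mc{D}_{[\det(\mc{F}_{k})]}$ is unobstructed (Remark \ref{unobs1}), hence formally smooth over $R$; since formal smoothness is stable under base change and $\mc{D}_{[\mc{F}_{k}],[\det(\mc{F}_{k})]}(A)=\mc{D}et_A^{-1}(\mc{L}_A)$ is the fibre of the formally smooth morphism $\mc{D}et$ over the section $A\mapsto\mc{L}_A$ of $\mc{D}_{[\det(\mc{F}_{k})]}$, the functor $\mc{D}_{[\mc{F}_{k}],[\det(\mc{F}_{k})]}$ is formally smooth over $R$, i.e. unobstructed. One technical point to be careful about: the identification ``lifts of $\mc{F}_A$ $\leftrightarrow$ $H^1(\Hc\otimes I)$'' and ``lifts of $\mr{det}(\mc{F}_A)$ $\leftrightarrow$ $H^1(\Hc\otimes I)$'' from \cite[Theorem $7.3$]{HD} must be compatible with the explicit \v{C}ech description used in \ref{b4glue}, Lemma \ref{detglue} and Proposition \ref{d6}, so that the commuting square really is a morphism of torsors intertwining the group actions via $\mr{tr}^1\otimes\mr{Id}$ — this equivariance (not merely commutativity of one square) is what the surjectivity argument needs, and it is the main thing to nail down.

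\begin{proof}
Let $0\to I\to A'\to A\to 0$ be a small extension in $\mr{Art}/R$, so $I\cdot\mf{m}_{A'}=0$ and $I$ is a $k$-vector space. Let $\mc{F}_A\in\mc{D}_{[\mc{F}_{k}],[\det(\mc{F}_{k})]}(A)$, i.e. $\mc{F}_A\in\mc{D}_{[\mc{F}_{k}]}(A)$ with $\mr{det}(\mc{F}_A)\simeq\mc{L}_A$. We must lift $\mc{F}_A$ to an element of $\mc{D}_{[\mc{F}_{k}],[\det(\mc{F}_{k})]}(A')$, i.e. to $\mc{F}_{A'}\in\mc{D}_{[\mc{F}_{k}]}(A')$ with $\mr{det}(\mc{F}_{A'})\simeq\mc{L}_{A'}$.

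By Remark \ref{unobs1} the functor $\mc{D}_{[\mc{F}_{k}]}$ is unobstructed, so there is a lift $\mc{F}^{0}_{A'}\in\mc{D}_{[\mc{F}_{k}]}(A')$ of $\mc{F}_A$. By Remark \ref{defr} and Definition \ref{d10} the fibre $r_1^{-1}(\mc{F}_A)$ of all lifts of $\mc{F}_A$ equals $\{\,\mc{F}^{0}_{A'}(\xi)=\phi_1(\xi)\ :\ \xi\in H^1(\Hc_{X_{k}}(\mc{F}_{k},\mc{F}_{k})\otimes_k I)\,\}$, and likewise every lift of $\mr{det}(\mc{F}_A)$ is of the form $\mr{det}(\mc{F}^{0}_{A'})(\eta)=\phi_2(\eta)$ for $\eta\in H^1(\Hc_{X_{k}}(\det(\mc{F}_{k}),\det(\mc{F}_{k}))\otimes_k I)$, the assignment $\eta\mapsto\phi_2(\eta)$ being compatible with the torsor structure of \ref{b4glue}.

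By Lemma \ref{detglue}, $\mr{det}(\mc{F}^{0}_{A'}(\xi))$ is obtained by glueing $\{\det(\mc{F}'_i)\}$ along the isomorphisms $\ov{\phi}_{ij}=\mr{det}_{ij}(\mr{Id}+(\phi''_{ij}\otimes a)\circ\pi)$, and by Proposition \ref{d6} these coincide with $\mr{Id}+(\mr{tr}_{U_{ij}}(\phi''_{ij})\otimes a)\circ\pi$, i.e. with the glueing data defining $\phi_2\big((\mr{tr}^1\otimes\mr{Id})(\xi)\big)$. Hence
\[
\mc{D}et_{A'}\big(\mc{F}^{0}_{A'}(\xi)\big)\ \simeq\ \mr{det}(\mc{F}^{0}_{A'})\big((\mr{tr}^1\otimes\mr{Id})(\xi)\big)
\]
for every $\xi$; that is, the square of Proposition \ref{d6} commutes compatibly with the $H^1$-actions, via $\mr{tr}^1\otimes\mr{Id}$.

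Now $\mc{L}_{A'}$ is a lift of $\mr{det}(\mc{F}_A)\simeq\mc{L}_A$, so $\mc{L}_{A'}\simeq\mr{det}(\mc{F}^{0}_{A'})(\eta)$ for some $\eta\in H^1(\Hc_{X_{k}}(\det(\mc{F}_{k}),\det(\mc{F}_{k}))\otimes_k I)$. By Corollary \ref{d11}, tensoring the short exact sequence $0\to\ker\mr{tr}\to\Hc_{X_{k}}(\mc{F}_{k},\mc{F}_{k})\xrightarrow{\mr{tr}}\Hc_{X_{k}}(\det(\mc{F}_{k}),\det(\mc{F}_{k}))\to 0$ with the $k$-vector space $I$ and using $H^2(\ker\mr{tr}\otimes_k I)=0$ (Grothendieck vanishing on the curve $X_k$), the map $\mr{tr}^1\otimes\mr{Id}$ is surjective. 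Choose $\xi$ with $(\mr{tr}^1\otimes\mr{Id})(\xi)=\eta$ and set $\mc{F}_{A'}:=\mc{F}^{0}_{A'}(\xi)$. Then $\mc{F}_{A'}\in\mc{D}_{[\mc{F}_{k}]}(A')$ lifts $\mc{F}_A$ and $\mr{det}(\mc{F}_{A'})\simeq\mr{det}(\mc{F}^{0}_{A'})(\eta)\simeq\mc{L}_{A'}$, so $\mc{F}_{A'}\in\mc{D}_{[\mc{F}_{k}],[\det(\mc{F}_{k})]}(A')$ lifts $\mc{F}_A$. Since $0\to I\to A'\to A\to 0$ was an arbitrary small extension, $\mc{D}_{[\mc{F}_{k}],[\det(\mc{F}_{k})]}$ is unobstructed.
\end{proof}
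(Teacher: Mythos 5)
Your proof is correct and follows essentially the same route as the paper's: pick a lift of $\mc{F}_A$ using unobstructedness of $\mc{D}_{[\mc{F}_k]}$, identify the fibres of $r_1$ and $r_2$ with the images of $\phi_1$ and $\phi_2$ (Remark \ref{defr}), and then combine the commutativity of the square in Proposition \ref{d6} with the surjectivity of $\mr{tr}^1\otimes\mr{Id}$ from Corollary \ref{d11} to adjust the lift so that its determinant becomes $\mc{L}_{A'}$. Your version is slightly more explicit about the torsor equivariance of the square (via Lemma \ref{detglue}), which is exactly the point the paper's terser argument relies on.
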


 \begin{proof}
 Let  $A' \twoheadrightarrow A$ be a small extension in $\mr{Art}/R$ and $\phi_{1},\phi_{2}$ be as in Definition \ref{d10}.
 Recall the surjective morphisms $r_1,r_2 $ from Remark \ref{defr}. 
 Then we have the following diagram.
   \[\begin{diagram}
          & &\mc{D}_{[\mc{F}_{k}],[\det (\mc{F}_{k})]}(A')&\rTo^{\psi}&\mc{D}_{[\mc{F}_{k}],[\det (\mc{F}_{k})]}(A)\\
          & &\dInto&\circlearrowleft&\dInto\\
         H^1(\Hc_{X_{k}}(\mc{F}_{k},\mc{F}_{k})\otimes_k I) &\rTo^{\phi_1} &\mc{D}_{[\mc{F}_{k}]}(A')&\rTo^r&\mc{D}_{[\mc{F}_{k}]}(A)\\
     \dTo^{\mr{tr}^1\otimes \mr{Id}} &\circlearrowleft&\dTo^{\mc{D}et_A'}&\circlearrowleft&\dTo^{\mc{D}et_A}\\
     H^1(\Hc_{X_{k}}( (\mc{F}_{k}),\det (\mc{F}_{k}))\otimes_k I) &\rTo^{\phi_2} &\mc{D}_{(\mc{F}_{k})}(A')&\rTo^r&\mc{D}_{[\det (\mc{F}_{k})]}(A)\\ 
           \end{diagram}\] 
                    
\noindent where the upper right square and the lower right square are commutative by definition and the lower left square is commutative by Proposition \ref{d6}.
To prove that $\mc{D}_{[\mc{F}_{k}],[\det (\mc{F}_{k})]}$ is unobstructed, we  need to show that $\psi$ is surjective.                    
Let $\mc{L}_A$ be the unique pull-back of $\mc{L}_R$ under the morphism $X_A \to X_R$ and $\mc{F}_A$ be an element in  $\mc{D}_{[F_{k}],[\det (\mc{F}_{k})]}(A)$. 
Since $\mc{D}_{[\mc{F}_{k}],[\det (\mc{F}_{k})]}(A') = \mr{det}(\mc{L_A'})$ where $\mc{L}_{A'}$ is $\pi^{*}\mc{L}_R$ for $\pi: X_{A'} \to X_{R}$, we need to prove there exists a sheaf
$\mc{F}_{A'}$ on $X_{A'}$ with determinant $\mc{L}_{A'}$ which is an extension of $\mc{F}_A$.

\vspace{0.2 cm}
By definition $r_2(\mc{L}_{A'})= \mc{L}_A$.
Since $\phi_1$ and $\phi_2$ are injective, $r^{-1}_{1}(\mc{F}_A)=\mr{Im}(\phi_1)$ and $r_{2}^{-1}(\mc{L}_A) =\mr{Im}(\phi_2)$.
Therefore, there exists $t \in H^1(\Hc_{X_{k}}(\det (\mc{F}_{k}),\det (\mc{F}_{k}))\otimes_k I)$ such that $\phi_2(t)=\mc{L}_{A'}$.
By Corollary \ref{d11}, $\mr{tr}^1\otimes \mr{Id}$ is surjective.
Hence there exists $t' \in H^1(\Hc_{X_{k}}(\mc{F}_{k},\mc{F}_{k})\otimes_k I)$ such that $\mr{tr}^1\otimes \mr{Id}(t')=t$. 
Denote by $\mc{F}_{A'}:=\phi_1(t')$. 
By commutativity of the lower left square, $\det(\mc{F}_{A'})=\mc{L}_{A'}$.
This concludes the proof of the theorem.
  \end{proof}

\section{Main results}\label{results}  
 
In Theorem \ref{artkn}, we showed that the deformation functor  $\mc{D}_{[\mc{F}_{k}],[\det (\mc{F}_{k})]}$ is unobstructed for any closed point $[\mc{F}_k]$ of the moduli space $M^{s}_{R, \mc{L}_{R}}$.
In this section we prove that this functor is in fact prorepresented by the completion of the local ring at the point $[\mc{F}_k]$ (see Proposition \ref{pro2}).
Using this we prove that the moduli space $M^{s}_{R, \mc{L}_{R}}$ of pure stable sheaves with fixed determinant $\mc{L}_R$ over $X_{R}$, is smooth over $\msp(R)$.

\begin{note}\label{nf} 
Keep Notations \ref{n} and \ref{nfd}.
Let $[\mc{F}_{k}]$ be a  $k$-rational point of $M^s_{R,\mc{L}_{R}}$ and denote by $\Lambda'':= \hat{\mo}_{M^{s}_{R,[\mc{F}_{k}]}}$, the completion of the local ring $\mo_{M^{s}_R,[\mc{F}_{k}]}$.
Under the determinant morphism $\mr{det}: M^s_R \to \mr{Pic}_{X_R}$, the line bundle $\mr{det}(\mc{F}_{k})$ is a $k$-point of $\mr{Pic}(X_R)$. 
Denote by $\Lambda':= \hat{\mo}_{\mr{Pic}(X_R),[\mr{det}(\mc{F}_{k})]}$ and by $\Lambda:= \hat{\mo}_{M^s_{R,\mc{L}_{R}},[\mc{F}_{k}]}$.
\end{note}


\begin{defi}
By ${\underline{\hat{\mo}}_{M^{s}_R,[\mc{F}_{k}]}}$ we denote the covariant functor 
\[ \Hom(\Lambda'',-): \mr{Art}/R \to \mr{Sets}, \ \ \  A \mapsto \Hom_{R-\mr{alg}}(\Lambda'',A) \]
We define the functors $\underline{\hat{\mo}}_{\mr{Pic}(X_R),[\mr{det}(\mc{F}_{k})]}$ and $\underline{\hat{\mo}}_{M^s_{R,\mc{L}_{R}},[\mc{F}_{k}]}$ similarly.
\end{defi}

\begin{lem}\label{pro1}
The deformation functor $\mc{D}_{[\mc{F}_{k}]}$ (resp. $\mc{D}_{[\mc{L}_{k}]}$) are pro-representable by $\hat{\mo}_{M^s_{R,[\mc{F}_{k}]}}$ (resp. $\hat{\mo}_{\mr{Pic}_{X_R},[\mr{det}(\mc{F}_{k})]}$).
\end{lem}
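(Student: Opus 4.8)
The statement to prove is that the deformation functor $\mc{D}_{[\mc{F}_k]}$ (and similarly $\mc{D}_{[\mr{det}(\mc{F}_k)]}$, denoted $\mc{D}_{[\mc{L}_k]}$ above) is pro-representable by the complete local ring $\hat{\mo}_{M^s_{R,[\mc{F}_k]}}$ (resp. $\hat{\mo}_{\mr{Pic}_{X_R},[\mr{det}(\mc{F}_k)]}$). The plan is to verify this via Schlessinger's criterion, using the fact that $M^s_R$ (resp. $\mr{Pic}(X_R)$) is already a fine moduli space (or, for $M^s_R$, carries a local universal family) so that morphisms out of its local ring are the natural candidate for the universal deformation. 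Concretely, I would first recall from \cite[Theorem $3.1$]{EL} that $M^s_R$ universally corepresents $\mc{M}^s_R$, and that since the automorphisms of a stable sheaf are only scalars, near the point $[\mc{F}_k]$ the Quot-scheme quotient $\mc{R}^s \to M^s_R$ admits (étale-locally, or after passing to completions) a universal family; equivalently, $\mc{F}_k$ being simple, the local moduli problem has no extra automorphisms to quotient by. For $\mr{Pic}(X_R)$ this is even cleaner: it is a fine moduli space by \cite[Theorem $9.4.8$]{FGA}, carrying a genuine Poincaré bundle after rigidification.

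The key steps, in order, would be: (1) Exhibit the natural transformation $\underline{\hat{\mo}}_{M^s_{R},[\mc{F}_k]} \to \mc{D}_{[\mc{F}_k]}$ sending an $R$-algebra map $\Lambda'' \to A$ to the pullback of the (completed) universal family along the induced map $\msp(A) \to M^s_R$; this is well-defined precisely because the residue field of $A$ is $k$ and the given point is $[\mc{F}_k]$, so the pullback to $X_k$ is $\mc{F}_k$. (2) Show this transformation induces an isomorphism on tangent spaces and on the obstruction-free obstruction spaces: the tangent space to $\mc{D}_{[\mc{F}_k]}$ is $H^1(\Hc_{X_k}(\mc{F}_k,\mc{F}_k))$ by \cite[Theorem $7.3$]{HD}, while the tangent space to $M^s_R$ at $[\mc{F}_k]$ is the same group by the deformation theory underlying the construction of the moduli space (the Kodaira–Spencer map is an isomorphism for a simple sheaf). (3) Invoke Schlessinger's criterion together with the standard fact that a smooth-at-a-point, or more precisely a hull-admitting, deformation functor which receives a map from a complete local ring inducing an iso on tangent spaces and is moreover prorepresentable (because $\mathrm{Aut}(\mc{F}_k) = \mb{G}_m$ acts trivially on deformations, so the relevant $H^0$ obstruction to prorepresentability vanishes) is prorepresented by that ring. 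Since by Remark \ref{unobs1} the functor is unobstructed, it is in fact pro-represented by a power series ring, but the identification with $\hat{\mo}_{M^s_{R,[\mc{F}_k]}}$ comes from the local-universal-family property rather than from smoothness alone. (4) Repeat verbatim for $\mr{Pic}(X_R)$ and $\mc{D}_{[\mr{det}(\mc{F}_k)]}$, where a genuine universal bundle makes the natural transformation an isomorphism of functors directly.

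The main obstacle I expect is step (1)–(3) for $M^s_R$ rather than for $\mr{Pic}$: namely justifying that $M^s_R$ carries enough of a universal family near a \emph{stable} point to make $\underline{\hat{\mo}}_{M^s_{R},[\mc{F}_k]} \to \mc{D}_{[\mc{F}_k]}$ an isomorphism and not merely a smooth surjection. The cleanest route is to use that $\mc{F}_k$ is simple ($\mr{Hom}(\mc{F}_k,\mc{F}_k) = k$), so that the $\mr{PGL}(V)$-action on $\mc{R}^s$ is free near the relevant orbit and the quotient $\mc{R}^s \to M^s_R$ is a $\mr{PGL}(V)$-torsor in the fppf (indeed étale) topology, as already cited from \cite[Lemma $6.3$]{M}; pulling back the tautological quotient on $\mc{R}^s$ and descending along this torsor gives the desired local universal family on the completion $\msp(\Lambda'')$, and descent identifies $\mc{D}_{[\mc{F}_k]}$ with $\Hom(\Lambda'',-)$. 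One must be a little careful that descent of the \emph{family} (not just the moduli point) works over artinian bases, but this is exactly the content of the torsor property plus the fact that $\mathrm{Pic}$ of a local artinian ring is trivial, so the ambiguity of the family by a line bundle on the base (the "$\sim$" in Definition \ref{mrl}) disappears. Once this is in place the tangent/obstruction comparison is formal from \cite[Theorem $7.3$]{HD}, and Schlessinger's criterion finishes the argument.
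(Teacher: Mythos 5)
Your proposal is correct in substance but takes a genuinely different route from the paper. The paper avoids Schlessinger's criterion altogether: it identifies $\mc{D}_{[\mc{F}_{k}]}$ with the deformation functor $\mc{D}_{Q,[\mc{F}_{k}]}$ of the Quot scheme $Q=\mc{Q}uot(\mc{H};P)$ at the point $[\mc{F}_{k}]$ (the quotient structure contributes nothing over a local Artin base, since $\mr{Pic}(\msp(A))=0$), then uses that $Q$ is a fine moduli space to get $\mc{D}_{Q,[\mc{F}_{k}]}\cong\underline{\hat{\mo}}_{Q,[\mc{F}_{k}]}$, and finally transports this along the \'etale $\mr{PGL}(V)$-bundle $\mc{R}^{s}\to M^{s}_{R}$ of \cite[Lemma $6.3$]{M} to arrive at $\underline{\hat{\mo}}_{M^{s}_{R},[\mc{F}_{k}]}$; the Picard case is handled by the same argument. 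You instead descend a formal universal family to $\msp(\Lambda'')$ and then verify pro-representability via Schlessinger (tangent-space comparison plus the condition (H4), which you derive from simplicity of $\mc{F}_{k}$). Both arguments rest on the same two load-bearing facts, namely representability of Quot and the torsor property of $\mc{R}^{s}\to M^{s}_{R}$, but your packaging demands extra verifications that the paper's direct chain of functor isomorphisms gets for free. Two caveats on your version: (i) your step (2), identifying the tangent space of $M^{s}_{R}$ at $[\mc{F}_{k}]$ with $H^{1}(\Hc_{X_{k}}(\mc{F}_{k},\mc{F}_{k}))$, is precisely the $A=k[\epsilon]$ instance of the lemma being proved, so it must be derived independently (e.g.\ from the Quot-scheme description, which is exactly what the paper does) to avoid circularity; (ii) once you have the formal universal family on $\msp(\Lambda'')$ and the bijection on $A$-points for every local Artin $A$ (lifting a deformation to a point of $\mc{R}^{s}(A)$ by trivializing the pushforward, and lifting a map $\Lambda''\to A$ through the torsor, which is trivial over a henselian local base with algebraically closed residue field), the isomorphism of functors is already exhibited and the appeal to Schlessinger is redundant.
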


\begin{proof}
Recall from the proof of \cite[Theorem $3.1$]{EL}, that for $m$ sufficiently large, 
$\mc{R}^s$ is the open subset of $\mc{Q}uot(\mc{H};P)$ where $\mc{H}:=\mo_{X_R}(-m)^{P(m)}$ parametrizing stable quotients.
By \cite[Lemma $6.3$]{M}, $\phi:\mc{R}^s \to M_R^s$ is an etale $\mr{PGL}(V)$-principal bundle . 
Therefore, $\hat{\mo}_{\mc{R}^s,[\mc{F}_{k}]} \cong \hat{\mo}_{{M}^s_{R},[\mc{F}_{k}]}$. 
 
\vspace{0.2 cm} 
Denote by $Q:=\mc{Q}uot(\mc{H};P)$ and by $\mc{D}_{Q,[\mc{F}_{k}]}$ the deformation functor corresponding to the Quot-scheme at the point $[\mc{F}_{k}]$.
Recall that for any local Artin ring $A$, $\mr{Pic}(\msp(A)) = 0$,  hence $\mc{D}_{[\mc{F}_{k}]}=\mc{D}_{Q,[\mc{F}_{k}]}$. 
Since the functor $\mr{Quot}$ is representable, the deformation functor $\mc{D}_{Q,[\mc{F}_{k}]}$ is pro-representable by $\underline{\hat{\mo}}_{Q,[\mc{F}_{k}]}$ i.e., 
\[\mc{D}_{Q,[\mc{F}_{k}]} \cong \underline{\hat{\mo}}_{Q,[\mc{F}_{k}]} \cong \underline{\hat{\mo}}_{\mc{R}^s,[\mc{F}_{k}]},\]
where the second isomorphism follows from the fact that $\mc{R}^s$ is an open subset of $Q$.
Therefore, $\mc{D}_{[\mc{F}_{k}]}$ is isomorphic to $\underline{\hat{\mo}}_{M^s_{R},[\mc{F}_{k}]}$.

\vspace{0.4 cm}
Using the same argument we can show that $\mc{D}_{[\mr{det}(\mc{F}_{k})]} \cong \underline{\hat{\mo}}_{\mr{Pic}_{X_R},[\mr{det}(\mc{F}_{k})]}$.
This proves the lemma.
\end{proof}

Using this lemma we prove the following proposition.

\begin{prop} \label{pro2}
The deformation functor $\mc{D}_{[\mc{F}_{k}],[\mr{det}(\mc{F}_{k})]}$ is pro-represented by the completion of the local ring $\mc{O}_{M^{s}_{R,\mc{L}_R},[\mc{F}_{k}]}$. 
\end{prop}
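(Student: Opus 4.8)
The plan is to realize $\mc{D}_{[\mc{F}_{k}],[\mr{det}(\mc{F}_{k})]}$ as a fibre product of deformation functors and then transport the pro-representability through that fibre product using Lemma \ref{pro1} together with the scheme-theoretic fibre description of $M^{s}_{R,\mc{L}_R}$ obtained in Proposition \ref{msklcorep}. Concretely, recall the natural transformation $\mc{D}et: \mc{D}_{[\mc{F}_{k}]} \to \mc{D}_{[\mr{det}(\mc{F}_{k})]}$ of Definition \ref{ntdet} and the constant subfunctor $*_{[\mc{L}_R]} \hookrightarrow \mc{D}_{[\mr{det}(\mc{F}_{k})]}$ picking out the deformation $\mc{L}_A = p_A^{*}\mc{L}_R$ for each $A \in \mr{Art}/R$ (this is legitimate since $\mr{Pic}(\msp A) = 0$ forces $\mc{L}_A$ to be \emph{the} deformation of $\mr{det}(\mc{F}_k)$ coming from $\mc{L}_R$, up to the equivalence in Definition \ref{mrl}). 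By Definition \ref{defdfl}, $\mc{D}_{[\mc{F}_{k}],[\mr{det}(\mc{F}_{k})]}(A) = \mc{D}et_A^{-1}(\mc{L}_A)$, so
\[ \mc{D}_{[\mc{F}_{k}],[\mr{det}(\mc{F}_{k})]} \;=\; \mc{D}_{[\mc{F}_{k}]} \times_{\mc{D}_{[\mr{det}(\mc{F}_{k})]}} *_{[\mc{L}_R]}. \]

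The second step is to match this with the geometry. By Lemma \ref{pro1}, $\mc{D}_{[\mc{F}_{k}]} \cong \underline{\hat{\mo}}_{M^{s}_R,[\mc{F}_{k}]}$ and $\mc{D}_{[\mr{det}(\mc{F}_{k})]} \cong \underline{\hat{\mo}}_{\mr{Pic}(X_R),[\mr{det}(\mc{F}_{k})]}$, and under these identifications $\mc{D}et$ is induced by the morphism of local rings coming from $\mr{det}: M^{s}_R \to \mr{Pic}(X_R)$; likewise the subfunctor $*_{[\mc{L}_R]}$ corresponds to the point $[\mc{L}_R]: \msp(R) \to \mr{Pic}(X_R)$, i.e. to the functor $\Hom_{R\text{-alg}}(R,-)$ mapped into $\underline{\hat{\mo}}_{\mr{Pic}(X_R),[\mr{det}(\mc{F}_{k})]}$ via the surjection $\Lambda' \twoheadrightarrow R$ classifying $[\mc{L}_R]$. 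Since $\Hom_{R\text{-alg}}(-,A)$ turns pushouts of rings into pullbacks of sets, the fibre product of functors above is pro-represented by the completed tensor product $\Lambda'' \,\hat{\otimes}_{\Lambda'} R$. But this completed tensor product is exactly the completion of the local ring of the scheme-theoretic fibre $N_{R,\mc{L}_R} = \mr{det}^{-1}(\mc{L}_R)$ at $[\mc{F}_k]$, and by the diagram in the proof of Proposition \ref{msklcorep} we have $M^{s}_{R,\mc{L}_R} \simeq N_{R,\mc{L}_R}$; hence $\Lambda'' \,\hat{\otimes}_{\Lambda'} R \cong \hat{\mo}_{M^{s}_{R,\mc{L}_R},[\mc{F}_k]} = \Lambda$, which is the claim.

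The main point requiring care — and what I expect to be the chief obstacle — is justifying that forming the fibre with respect to the fixed determinant commutes with passing to completed local rings, i.e. that $\hat{\mo}_{M^{s}_R,[\mc{F}_k]} \,\hat{\otimes}_{\hat{\mo}_{\mr{Pic}(X_R),[\mr{det}(\mc{F}_k)]}} R$ genuinely pro-represents the fibre functor and equals $\hat{\mo}_{N_{R,\mc{L}_R},[\mc{F}_k]}$. This is where one uses that $\mr{Pic}(X_R)$ is representable (so that $[\mc{L}_R]$ is a genuine section and $\Lambda' \twoheadrightarrow R$ is a well-defined surjection of complete local $R$-algebras), that completion is exact and compatible with the quotient defining the fibre, and that $\mc{R}^{s} \to M^{s}_R$ and $\mc{R}^{s}_{\mc{L}_R} \to M^{s}_{R,\mc{L}_R}$ being $\mr{PGL}(V)$-bundles (as in Proposition \ref{msklcorep}) lets us pass freely between the Quot-scheme picture and the moduli-space picture. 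I would also remark that once pro-representability is established, combining Proposition \ref{pro2} with Theorem \ref{artkn} (unobstructedness) shows $\Lambda$ is a power series ring over $R$, which feeds directly into the smoothness statement of §\ref{results}; but that deduction belongs to the next result, not to the proof of this proposition.
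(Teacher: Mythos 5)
Your proposal follows essentially the same route as the paper: the paper likewise identifies $\mc{D}_{[\mc{F}_{k}],[\mr{det}(\mc{F}_{k})]}(A)$ with $\underline{\mr{det}}_A^{-1}(\phi_{\mc{L}_A})$ via Lemma \ref{pro1} and then reads off $\Hom_R(\Lambda,A)$ from the cartesian square $\msp(\Lambda)=\msp(\Lambda'')\times_{\msp(\Lambda')}\msp(R)$, which is exactly your completed tensor product $\Lambda''\,\hat{\otimes}_{\Lambda'}R$ together with the identification $M^{s}_{R,\mc{L}_R}\simeq N_{R,\mc{L}_R}$ from Proposition \ref{msklcorep}. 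Your version merely makes the fibre-product-of-functors language and the compatibility of completion with the scheme-theoretic fibre more explicit than the paper does.
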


\begin{proof}

By Lemma \ref{pro1}, $\mc{D}_{[\mc{F}_{k}]}$ (respectively $\mc{D}_{[\mr{det}(\mc{F}_{k})]}$) is pro-represented by $\hat{\mo}_{M^s_{R},[\mc{F}_{k}]}$ 
(respectively $\hat{\mo}_{\mr{Pic}(X_R),[\mr{det}(\mc{F}_{k})]}$). We have a natural transformation 
\[\underline{\mr{det}}:\underline{\hat{\mo}}_{M^s_{R,[\mc{F}_{k}]}} \to \underline{\hat{\mo}}_{\mr{Pic}(X_R),[\mr{det}(\mc{F}_{k})]}\] induced by the determinant morphism, $\mr{det}: M^{s}_R \to \mr{Pic}(X_{R})$
localized at the point $[F_{k}]$.
Let $A \in \mr{Art}/R$ and $\mc{L}_{A}$ be the pullback of the line bundle $\mc{L}_{R}$ under the morphism $X_A \to X_R$.
Recall the natural transformation ${\mc{D}et}_{A}$ defined in Definition \ref{ntdet}.
We have the following commutative diagram 

\[\begin{diagram}
   \mc{D}_{[\mc{F}_{k}]}(A)&\rTo^{\sim}&\underline{\hat{\mo}}_{M^s_{R,[\mc{F}_{k}]}}(A)\\
   \dTo^{\mc{D}et_{A}} &\circlearrowleft& \dTo^{\underline{\mr{det}}_{A}}\\
   \mc{D}_{[\mr{det}(\mc{F}_{k})]}(A)&\rTo^{\sim}_{\sigma}&\underline{\hat{\mo}}_{\mr{Pic}(X_R),[\mr{det}(\mc{F}_{k})]}(A)\\
  \end{diagram}\]
   
Hence the deformation functor  $\mc{D}_{[\mc{F}_{k}],[\mr{det}(\mc{F}_{k})]}(A) \cong {\underline{\mr{det}}_A}^{-1}(\phi_{\mc{L}_A})$, where $\phi_{\mc{L}_A}:= \sigma(\mc{L}_{A})$.
Therefore to prove that $\mc{D}_{[\mc{F}_{k}],[\mr{det}(\mc{F}_{k})]}$ is pro-represented by $\underline{\hat{\mo}}_{M^s_{R,\mc{L}_R},[\mc{F}_{k}]}$, we need to show that for any $A \in \mr{Art}/R$,

\begin{equation} \label{proeq}
 {\underline{\mr{det}}_A}^{-1}(\phi_{\mc{L}_A}) \cong  \mr{Hom}_{R}(\Lambda,A).  
\end{equation}

\noindent By Lemma \ref{pro1}, $\mc{D}_{[\mr{det}(\mc{F}_{k})]}(A) \xrightarrow{\sim} \Hom_{R}(\Lambda', A)$.
Hence for a fixed element $\mc{L}_A \in \mc{D}_{[\mr{det}(\mc{F}_{k})]}(A)$, the corresponding morphism from  $\msp(A) \to \msp(\Lambda')$ is unique and this is the morphism $\phi_{\mc{L}_{A}}$.
This implies the commutativity of the following diagram

\[\begin{diagram}
\msp(A) & \rTo & \msp(\Lambda'') \\ 
\dTo &\circlearrowleft& \dTo \\
\msp(R) & \rTo &  \msp(\Lambda')
\end{diagram}\]

\noindent where the morphism $\msp(R)  \to  \msp(\Lambda')$ is the morphism corresponding to the line bundle $\mc{L}_R$. 
Then the bijection in (\ref{proeq}) follows from the property of fibre product and the following diagram.

 \[\begin{diagram}
 \msp(A)&&&&\\
 &\rdTo(4,2)\rdTo(2,4)&&&\\
 & &\msp(\Lambda) &\rTo&\msp(\Lambda'')\\
 & &\dTo&\square&\dTo\\
 & &\msp(R)&\rTo&\msp(\Lambda')\\
    \end{diagram}\]

\noindent Since $A$ was arbitrary, (\ref{proeq}) holds for any $A \in \mr{Art}/R$.  
Hence $\mc{D}_{[\mc{F}_{k}],[\mr{det}(\mc{F}_{k}]}$ is pro-represented by $\hat{\mc{O}}_{M^{s}_{R,\mc{L}_R},[\mc{F}_{k}]}$.
\end{proof}

\vspace{0.2 cm}

\noindent Using this we prove the following theorem.
 
 \vspace{0.2 cm}
 
\begin{thm}\label{smooth}
The morphism $M^{s}_{R,\mc{L}_{R}} \to \msp(R)$ is smooth. 
\end{thm}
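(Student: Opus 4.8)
The plan is to deduce smoothness of $M^{s}_{R,\mc{L}_{R}} \to \msp(R)$ from the infinitesimal criterion, using the deformation-theoretic results already established. First I would recall that $M^{s}_{R,\mc{L}_{R}}$ is of finite type over $R$ (Proposition \ref{msklcorep}), so it suffices to verify the formal smoothness criterion at each closed point. Since $k$ is algebraically closed and the morphism is of finite type, every closed point is a $k$-rational point $[\mc{F}_{k}]$, and formal smoothness at $[\mc{F}_{k}]$ is equivalent to the statement that for every small extension $A' \twoheadrightarrow A$ in $\mr{Art}/R$, the map $\Hom_{R}(\Lambda, A') \to \Hom_{R}(\Lambda, A)$ is surjective, where $\Lambda = \hat{\mo}_{M^s_{R,\mc{L}_{R}},[\mc{F}_{k}]}$.

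Next I would invoke Proposition \ref{pro2}, which identifies the deformation functor $\mc{D}_{[\mc{F}_{k}],[\mr{det}(\mc{F}_{k})]}$ with $\underline{\hat{\mo}}_{M^s_{R,\mc{L}_{R}},[\mc{F}_{k}]} = \Hom_{R}(\Lambda,-)$, and Theorem \ref{artkn}, which asserts that $\mc{D}_{[\mc{F}_{k}],[\mr{det}(\mc{F}_{k})]}$ is unobstructed. Unwinding the definition of unobstructedness, this says exactly that for every small extension $A' \twoheadrightarrow A$, the map $\mc{D}_{[\mc{F}_{k}],[\mr{det}(\mc{F}_{k})]}(A') \to \mc{D}_{[\mc{F}_{k}],[\mr{det}(\mc{F}_{k})]}(A)$ is surjective. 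Transporting this along the isomorphism of functors from Proposition \ref{pro2} gives surjectivity of $\Hom_{R}(\Lambda,A') \to \Hom_{R}(\Lambda, A)$, which is the needed formal smoothness of the complete local ring $\Lambda$ over $R$.

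Finally I would pass from formal smoothness at each closed point back to smoothness of the scheme morphism. Since $M^{s}_{R,\mc{L}_{R}}$ is locally of finite type over the complete DVR $R$, and $\hat{\mo}_{M^s_{R,\mc{L}_{R}},[\mc{F}_{k}]}$ is a formally smooth $R$-algebra for every closed point $[\mc{F}_{k}]$, a standard descent of smoothness from the completion to the local ring (e.g. via \cite[Theorem $2.2.1$]{HL}-type arguments, or EGA IV smoothness criteria) shows $\mo_{M^s_{R,\mc{L}_{R}},[\mc{F}_{k}]}$ is smooth over $R$; since smoothness is an open condition and every point specializes to a closed point, the morphism $M^{s}_{R,\mc{L}_{R}} \to \msp(R)$ is smooth. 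The main obstacle I anticipate is purely bookkeeping: making sure the identification in Proposition \ref{pro2} is compatible with the small-extension maps on both sides (so that ``unobstructed'' in the deformation-functor sense really does translate to ``formally smooth'' for $\Lambda$), and correctly citing the passage from formal smoothness of the complete local ring to smoothness of the finite-type $R$-scheme. No genuinely new geometric input is required at this stage; the content has already been extracted in Theorem \ref{artkn} and Proposition \ref{pro2}.
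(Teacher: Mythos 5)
Your proposal follows exactly the paper's argument: reduce to closed points via openness of smoothness and finite type, identify $\underline{\hat{\mo}}_{M^s_{R,\mc{L}_{R}},[\mc{F}_{k}]}$ with the deformation functor $\mc{D}_{[\mc{F}_{k}],[\mr{det}(\mc{F}_{k})]}$ via Proposition \ref{pro2}, and conclude unobstructedness (hence formal smoothness) from Theorem \ref{artkn}. Your version is slightly more explicit about translating unobstructedness into the lifting criterion for $\Hom_{R}(\Lambda,-)$ and about descending from the completion to the local ring, but the route and the ingredients are the same.
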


\begin{proof}
Since the scheme $M^{s}_{R,\mc{L}_{R}}$ is noetherian and smoothness is an open condition, it suffices to check that the morphism $M^{s}_{R,\mc{L}_{R}} \to \msp(R)$ is smooth at closed points.
Let $[\mc{F}_{k}]$ be a closed point of $M^{s}_{R,\mc{L}_{R}}$.
Since the morphism $M^{s}_{R,\mc{L}_{R}} \to \msp(R)$ is of finite type, to prove that it is smooth at the point $[\mc{F}_{k}]$, we need to show that the functor $\underline{\hat{\mo}}_{M^s_{R,\mc{L}_{R}},[\mc{F}_{k}]}$ is unobstructed.

\vspace{0.2 cm}
By Proposition \ref{pro2}, the completion of the local ring $\mc{O}_{M^{s}_{R,\mc{L}_R},[\mc{F}_{k}]}$ pro-represents the functor $\mc{D}_{[\mc{F}_k],[\mr{det}(\mc{F}_k)]}$, i.e
$\underline{\hat{\mo}}_{M^s_{R,\mc{L}_{R}},[\mc{F}_{k}]} \simeq \mc{D}_{[\mc{F}_{k}],[\mr{det}(\mc{F}_{k}]}$.
By Theorem \ref{artkn}, the deformation functor $\mc{D}_{[\mc{F}_{k}],[\mr{det}(\mc{F}_{k})]}$ is unobstructed.
Hence the functor $\underline{\hat{\mc{O}}}_{M^{s}_{R,\mc{L}_R},[\mc{F}_{k}]} $ is unobstructed. 
This implies $\hat{\mc{O}}_{M^{s}_{R,\mc{L}_R},[\mc{F}_{k}]}$ is unobstructed.
Hence, the morphism  $M^s_{R,\mc{L}_{R}} \to \msp(R)$ is smooth at the point $[\mc{F}_k]$.
\end{proof}

\bibliographystyle{plain}
\bibliography{gen}

\end{document}